\newcommand{\titre}{Efficient recognition of totally nonnegative matrix cells}
\newtheorem{theorem}{Theorem}[section]
\newtheorem{corollary}[theorem]{Corollary}
\newtheorem{definition}[theorem]{Definition}
\newtheorem{lemma}[theorem]{Lemma}
\newtheorem{proposition}[theorem]{Proposition}
\theoremstyle{definition}
\newtheorem{mydef}[theorem]{Definition}
\newtheorem{example}[theorem]{Example}
\newtheorem{defi}[theorem]{Definition}
\newtheorem{nota}[theorem]{Notation}
\newtheorem{conv}[theorem]{Convention}
\newtheorem{remark}[theorem]{Remark}
\newcommand{\oc}{\mathcal{O}}
\newcommand{\mc}{\mathcal{M}}
\newcommand{\bbN}{\mathbb{N}}
\newcommand{\bbR}{\mathbb{R}}
\newcommand{\bbC}{\mathbb{C}} 
\newcommand{\pmmpc}{\oc\left( \mathcal{M}_{m,p}(\bbC) \right)}
\newcommand{\ia}{i,\alpha}
\newcommand{\mmptnn}{\mc^{\geq 0}_{m,p}(\bbR)}
\newcommand{\gc}{ [ \hspace{-0.65mm} [}
\newcommand{\dc}{]  \hspace{-0.65mm} ]}
\def\edge{\ar@{-}}
\def\Ecirc{E^{\circ}}
\long\def\symbolfootnote[#1]#2{\begingroup\def\thefootnote{\fnsymbol{footnote}}\footnote[#1]{#2}\endgroup}
\begin{document}

\title{\titre} \author{S Launois\thanks{The first author is grateful for the
financial support of EPSRC first grant \textit{EP/I018549/1}.}~ and T H
Lenagan}
%%%%%%\author{} \date{}%%%%%%%%
\date{}

\maketitle

%%%%%%%%%%%%%%%%%%%%%%%%%%%%%%%%%%%%%%%%%%%%%%%%%%%%%%%%%%%%%%%%%%%%%%%%%%%%%%%%%%%%%%%%%%%%%%%%%%%%%%%%%%%%%%%%%%%%%%

\begin{abstract}\footnotesize 
The space of $m\times p$ totally nonnegative real matrices has a
stratification into totally nonnegative cells. The largest such cell is the
space of totally positive matrices. There is a well-known criterion due to
Gasca and Pe\~na for testing a real matrix for total positivity. This
criterion involves testing $mp$ minors. In contrast, there is no known small
set of minors for testing for total nonnegativity. In this paper, we show that
for each of the totally nonnegative cells there is a test for membership which 
only involves $mp$ minors, thus extending the Gasca and Pe\~na result to 
all totally nonnegative cells. 
\end{abstract}
%%%%%%%%%%%%%%%%%%%%%%%%%%%%%%%%%%%%%%%%%%%%%5%%%%%%%%%%%%%%%%%%%%%%%%%%%%%%%%%%%%%%%%%%%%%%%%%%%%%%%%%%%%%%%%%%%%%%%

\vskip .5cm
\noindent
{\em 2010 Mathematics subject classification:} 15B48

\vskip .5cm
\noindent
{\em Key words:} Totally nonnegative matrices; totally nonnegative cells; 
efficient testing criteria

\section{Introduction}

\setcounter{figure}{0}

An $m\times p$ matrix $M$ with entries from $\bbR$ is said to be \emph{totally
nonnegative} (tnn for short) if each of its minors is nonnegative. Further, such a matrix is
\emph{totally positive} if each of its minors is strictly positive. (Warning:
in some texts, the terms totally positive and strictly totally positive are
used for our terms totally nonnegative and totally positive, respectively.) 

Totally nonnegative matrices arise in many areas of mathematics and there has
been considerable interest lately in the study of these matrices. For
background information and historical references, there are the newly
published books by Fallat and Johnson, \cite{fj} and Pinkus, \cite{pinkus} and
also two good survey articles \cite{ando} and \cite{fz}.

In general, the number of minors of an $m\times m$ matrix is 
$\sum_{k=1}^m\,{m\choose k}^2 
= {2m\choose m} -1
\approx \frac{4^m}{\sqrt{\pi m}}   
$, 
by using Stirling's approximation to the factorial. 
Thus, it is obviously impractical to check whether a matrix is totally
nonnegative or totally positive by naively checking all minors. 

In this paper, we are concerned with the {\em recognition problem}: can one
decide, by checking a small number of minors,
whether or not a real matrix is totally
nonnegative or totally positive? 
%In particular, can one specify a small
%collection of minors that one can check to decide? 
There is an efficient set of
minors to test for total positivity: Gasca and Pe\~na, \cite{gp1}, specify
$mp$ minors that can be used to check total positivity. In order to describe
their result, we need to introduce some terminology. If a minor of a matrix is
formed by using rows from a set $I$ and columns from a set $J$ then we denote
the minor by $[I|J]$, or $[I|J](M)$ if it is necessary to specify that we are
using the matrix $M$. A minor is said to be an {\em initial minor} if $I$ and
$J$ each consists of a sequence of consecutive integers and $1\in I\cup J$.
Each initial minor is specified by its bottom right entry and so there are
$mp$ initial minors in an $m\times p$ matrix. Gasca and Pe\~na's criterion is
that a real matrix is totally positive if each of its initial minors is
strictly positive. The number of minors used in Gasca and
Pe\~na's criterion, $mp$, is best possible. However, there are many other
possible choices for a set of $mp$ minors which will give an efficient check
for total positivity. Interested readers should consult \cite{fz} where the
notion of double wiring diagrams is discussed and it is shown that each double
wiring diagram gives rise to a set of $n^2$ minors that can be used to check
for total positivity of $n\times n$ real matrices, see \cite[Theorem 16]{fz}.

In contrast, as far as we know, there is no small set of minors that can be
used to check for total nonnegativity. The best result we are aware of is a
result by Gasca and Pe\~na, \cite{gp2}, which specifies a set of minors of
size approximately the square root of the total number of minors of the matrix
 that can be used to check total nonnegativity of an invertible matrix, see
\cite[Theorem 29]{fz} and the remark following that theorem. 

This discrepancy in the two tests may be explained by considering the 
notion of the cell decomposition of the space of totally nonnegative matrices that was 
studied  by Postnikov in \cite{post}. 

Let $\mc=\mmptnn$ denote the set of $m\times p$ matrices that are totally
nonnegative. Let $Z$ be a set of minors and denote by $S^{\circ}_Z$ the set of
matrices $M\in\mmptnn$ for which $[I|J](M)=0$ if and only if $[I|J]\in Z$;
this set is known as the {\em (totally nonnegative) cell corresponding to the
set $Z$}. For many choices of $Z$, the cell corresponding to $Z$ will be
empty: for example, a $2\times 2$ matrix has five minors, so there are $32$
choices of $Z$ but it is not too difficult to check that only $14$ of these
choices give rise to nonempty cells. For example, it is an easy exercise to
check that the totally nonnegative cell corresponding to the choice
$Z:=\{[1|1]\}$ is empty. Nevertheless, in general, there are still a large
number of non-empty cells: a weak, but easily established, lower bound is that
$\mmptnn$ has more than $m^p$ nonempty cells.

The nonempty cells provide a stratification of the set $\mc$. The cell
corresponding to $Z=\emptyset$ is the set of totally positive matrices.
Although one can show that the set of totally positive matrices is dense in
the set of totally nonnegative matrices, nevertheless, it is only one of many
nonempty cells. 

The main aim of this paper (achieved in Theorem~\ref{theorem-main}) is to
provide a set of $mp$ minors for deciding whether or not an arbitrary real
$m\times p$ matrix belongs to a specified nonempty cell. This result extends
the criterion of Gasca and Pe\~na for the totally positive cell to any
nonempty cell. In the special case of invertible square matrices, our result
also extends related work of Fomin and Zelevinsky. In \cite[Theorems 4.1 and
4.13]{fz2} Fomin and Zelevinsky give recognition criteria for those totally
nonnegative cells in $n\times n$ square matrices which do not contain the
determinant. Note however that there are many totally nonnegative cells in
$\mc^{\geq 0}_{n}(\bbR)$ that do contain the determinant. For
example, in the $3\times 3$ case, there are 230 nonempty totally nonnegative
cells, of which 194 contain the determinant, and in the $4\times 4$ case,
there are 6902 nonempty totally nonnegative cells, of which 6326 contain
the determinant.

The methods used to construct our test were originally developed by 
Cauchon, \cite{cauchon2}, while studying quantum matrices. In recent papers, 
a close connection has been shown to exist between the cell decomposition 
of totally nonnegative matrices and the so-called invariant prime 
spectrum of quantum matrices, see, for example, \cite{gll1, gll2, ll} and 
it has become apparent that methods used in the quantum world may be used 
to suggest methods and results in the classical world of total nonnegativity,
 see, for an example, \cite{gl}. 

The tools we use here depend on the notion of Cauchon diagrams and 
the deleting derivation algorithm that Cauchon developed in \cite{cauchon2} 
for quantum matrices. Here, we propose that the corresponding algorithm 
for totally nonnegative matrices shall be called the 
{\em Cauchon reduction algorithm}. 

In the next section, we describe Cauchon diagrams and the
Cauchon reduction algorithm. We also introduce results from \cite{gll1} which
show how $m\times p$ Cauchon diagrams can be used to parameterise the nonempty
totally nonnegative cells in $\mmptnn$. As an example, we consider the totally
positive cell. This example motivates the discussion of lacunary sequences for
Cauchon diagrams which are introduced in Section~\ref{section-lacunary}.
Finally, in Section~\ref{section-recognition}, we show how, for each Cauchon
diagram, lacunary sequences can be used to specify a set of $mp$ minors which
can be used to test for membership of the totally nonnegative cell
corresponding to the given Cauchon diagram.

\section{Cauchon diagrams and Cauchon reduction}

Set $\gc 1,m\dc:=\{1,2,\dots, m\}$ for any $m\geq 1$. 
  
\begin{definition}\cite{cauchon2} {\rm
An $m\times p$ \emph{Cauchon diagram} $C$ is simply an $m\times p$ grid
consisting of $mp$ squares in which certain squares are coloured black. We
require that the collection of black squares has the following property: if a
square is black, then either every square strictly to its left is black or
every square strictly above it is black.

We denote by $\mathcal{C}_{m,p}$ the set of $m\times p$ Cauchon diagrams.
}
\end{definition}

Note that we will often identify an $m \times p $ Cauchon diagram with the set
of coordinates of its black boxes. Indeed, if $C \in \mathcal{C}_{m,p}$ and
$(\ia) \in \gc 1, m\dc \times \gc 1, p\dc$, we will say that $(\ia) \in C$ if
the box in row $i$ and column $\alpha$ of $C$ is black. For instance, for the
Cauchon diagram $C$ of Figure~\ref{fig:CD}, we have $(2,4) \in C$,
whereas $(4,3) \notin C$. (Note that we are using the usual
matrix notation for the $(i,\alpha)$ position in a Cauchon diagram; so that the
$(1,1)$ box of a Cauchon diagram is at the top left.)

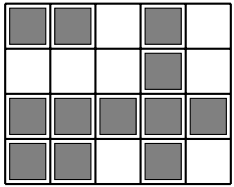
\begin{figure}[h]
\begin{center}
\begin{tikzpicture}[xscale=0.6,yscale=0.6]
\draw [fill=gray] (0.1,0.1) rectangle (0.9,0.9);
\draw [fill=gray] (1.1,0.1) rectangle (1.9,0.9);
\draw [fill=gray] (3.1,0.1) rectangle (3.9,0.9);
\draw [fill=gray] (0.1,1.1) rectangle (0.9,1.9);
\draw [fill=gray] (1.1,1.1) rectangle (1.9,1.9);
\draw [fill=gray] (2.1,1.1) rectangle (2.9,1.9);
\draw [fill=gray] (3.1,1.1) rectangle (3.9,1.9);
\draw [fill=gray] (4.1,1.1) rectangle (4.9,1.9);
\draw [fill=gray] (3.1,2.1) rectangle (3.9,2.9);
\draw [fill=gray] (0.1,3.1) rectangle (0.9,3.9);
\draw [fill=gray] (1.1,3.1) rectangle (1.9,3.9);
\draw [fill=gray] (3.1,3.1) rectangle (3.9,3.9);
\draw [help lines,black,thick] (0,0) grid (5,4);
\end{tikzpicture}
\caption{An example of a $4\times 5$ Cauchon diagram \label{fig:CD}}
\end{center}
\end{figure}

\begin{defi}
Let $X=(x_{i,\alpha})$ be a real $m\times p$ matrix and let 
$C$ be a Cauchon diagram (of size $m\times p$). 
We say that $X$ is a \emph{Cauchon
matrix associated to the Cauchon
diagram $C$} provided that for all 
$(\ia) \in \gc 1,m \dc \times \gc 1,p \dc$, we
have
$x_{\ia}=0$ if and only if $(\ia) \in C$. 
If $X$ is a Cauchon matrix associated to
an unnamed Cauchon diagram, we just say that 
$X$ is a \emph{Cauchon matrix}.
\end{defi}

A key link between Cauchon diagrams and totally nonnegative matrices 
is provided by the following easy lemma.

\begin{lemma} 
Every totally nonnegative matrix over $\bbR$ is a Cauchon matrix.
\end{lemma}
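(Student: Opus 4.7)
The plan is as follows. Given a totally nonnegative matrix $X=(x_{i,\alpha})$, the natural candidate for the associated Cauchon diagram is
$$C := \{(i,\alpha) \in \gc 1,m\dc \times \gc 1,p \dc \mid x_{i,\alpha}=0\}.$$
With this definition, the biconditional ``$x_{i,\alpha}=0$ iff $(i,\alpha)\in C$'' required by the definition of a Cauchon matrix is built in for free. The entire content of the lemma is therefore that $C$ satisfies the combinatorial condition to be a Cauchon diagram: whenever $(i,\alpha) \in C$, either every box strictly to its left lies in $C$, or every box strictly above it lies in $C$.

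I would prove this by contradiction via a single $2\times 2$ minor argument. Suppose $(i,\alpha)\in C$ but the Cauchon condition fails at this box. Then there exist $\beta<\alpha$ and $j<i$ such that $(i,\beta)\notin C$ and $(j,\alpha)\notin C$; in other words $x_{i,\beta}\neq 0$ and $x_{j,\alpha}\neq 0$. Since $X$ is totally nonnegative, all its entries are nonnegative, so in fact $x_{i,\beta}>0$ and $x_{j,\alpha}>0$. Now look at the minor on rows $\{j,i\}$ and columns $\{\beta,\alpha\}$:
$$[j,i\mid \beta,\alpha](X) \;=\; x_{j,\beta}\,x_{i,\alpha} - x_{j,\alpha}\,x_{i,\beta} \;=\; 0 - x_{j,\alpha}\,x_{i,\beta} \;<\; 0,$$
since $x_{i,\alpha}=0$. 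This contradicts the total nonnegativity of $X$.

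There is no serious obstacle here; the only thing to be mindful of is that the hypothesis of total nonnegativity is used twice, once to guarantee that the nonzero entries $x_{i,\beta}$ and $x_{j,\alpha}$ are strictly positive (not merely nonzero of unknown sign), and once to conclude that the $2\times 2$ minor displayed above must be nonnegative. Both uses are immediate, so the proof is essentially the single display above.
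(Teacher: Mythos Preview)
Your proof is correct and is essentially the same as the paper's: both hinge on the single $2\times 2$ minor $[j,i\mid\beta,\alpha]$ with $x_{i,\alpha}=0$. The only cosmetic difference is that the paper phrases it directly (if some entry strictly above is nonzero, then every entry strictly to the left must vanish) rather than by contradiction, but the computation is identical.
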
 

\begin{proof} Let $X=(x_{i,\alpha})$ be a tnn matrix. Suppose that some $x_{\ia} =0$, and that $x_{k,\alpha} >0$ for some $k<i$. Let $\gamma<\alpha$. We need to prove that $x_{i,\gamma} =0$.
As $X$ is tnn, we have $-x_{k, \alpha} x_{i, \gamma} = \det 
\left(\begin{smallmatrix}
x_{k, \gamma} & x_{k, \alpha} \\ x_{i, \gamma} & x_{i, \alpha} 
\end{smallmatrix}\right)
\geq 0$. 
As $x_{k,\alpha} > 0$, this forces $x_{i,\gamma} \leq 0$. But since
$X$ is tnn, we also have $x_{i,\gamma} \geq 0$, so that $x_{i,\gamma} = 0$, as
desired. 
 
Therefore $X$ is a Cauchon matrix.
\end{proof}

In order to define the Cauchon reduction algorithm we need the following
notation.

\begin{nota}
\begin{itemize}
\item We denote by $\leq$ the lexicographic ordering on $\bbN^2$. Recall that
$$
(\ia) \leq (j,\beta) \Longleftrightarrow 
[(i < j) \mbox{ or } (i=j \mbox{ and } \alpha \leq \beta )].
$$
\item Set $\Ecirc= \left(\gc 1,m \dc \times \gc 1,p \dc \right)
\setminus \{(1,1)\}$ and $E= \Ecirc\cup \{(m+1,p)\}$.
\item Let $(j,\beta) \in \Ecirc$. 
Then $(j,\beta)^{+}$ denotes the smallest element 
(relative to $\leq$) of the set 
$\left\{ (\ia) \in E \ | \ (j,\beta) < (\ia) \right\}$.
\end{itemize}
 \end{nota}
Thus, $(j,\beta)^{+}=(j,\beta+1)$ when $\beta<p$ and $(j,p)^+=(j+1,p)$.

\begin{conv}[Cauchon reduction algorithm]
\label{alg1}
Let $M=(x_{\ia})$ be a real $m\times p$  matrix. 
As $r$ runs over the set $E$, we define matrices 
$M^{(r)} :=(x_{\ia}^{(r)})$ as follows.
\begin{enumerate}
\item \underline{When $r=(m+1,p)$}, the entries of the matrix 
$M^{(m+1,p)}$ are defined by 
$x_{\ia}^{(m+1,p)}:=x_{\ia}$ for all 
$(\ia) \in \gc 1,m \dc \times \gc 1,p \dc$; so $M^{(m+1,p)}=M$. 
\item \underline{Assume that $r=(j,\beta) \in \Ecirc$} and that 
the matrix $M^{(r^{+})}=(x_{\ia}^{(r^{+})})$ has 
already been defined. The entries $x_{\ia}^{(r)}$ of the matrix 
$M^{(r)}$ are defined as follows.
\begin{enumerate}
\item If $x_{j,\beta}^{(r^+)}=0$, then $x_{\ia}^{(r)}=x_{\ia}^{(r^+)}$ 
for all $(\ia) \in \gc 1,m \dc \times \gc 1,p \dc$.
\item If $x_{j,\beta}^{(r^+)}\neq 0$ and 
$(\ia) \in \gc 1,m \dc \times \gc 1,p \dc$, then 

$x_{\ia}^{(r)}= \left\{ \begin{array}{ll}
x_{\ia}^{(r^+)}-x_{i,\beta}^{(r^+)} \left( x_{j,\beta}^{(r^+)}\right)^{-1} 
x_{j,\alpha}^{(r^+)}
& \qquad \mbox{if }i <j \mbox{ and } \alpha < \beta \\
x_{\ia}^{(r^+)} & \qquad \mbox{otherwise.} \end{array} \right.$ 
\end{enumerate}
We say that {\em $M^{(r)}$ is the matrix obtained from $M$ by 
applying the Cauchon reduction  
algorithm at step $r$}, and $x_{j,\beta}^{(r^+)}$ is called the 
{\em pivot at step $r$}.
\item \underline{If $r=(1,2)$}, then we set $t_{\ia}:=x_{\ia}^{(1,2)}$ 
for all $(\ia) \in \gc 1,m \dc \times \gc 1,p \dc$. Observe that 
$x^{(r)}_{\ia}= x^{(r^+)}_{\ia}$ for all $r\le (\ia)$, 
and so $t_{\ia}= x^{(\ia)}_{\ia}= x^{(\ia)^+}_{\ia}$ 
for all $(\ia)\in \Ecirc$.  The matrix $\widetilde{M}:=M^{(1,2)}$ 
is {\em the matrix obtained from $M$ at the end of the Cauchon reduction 
algorithm}.
\end{enumerate}
\end{conv}

In \cite{gll1} it is proved that the Cauchon reduction algorithm provides a bijection between the
nonempty cells of $\mmptnn$ and the Cauchon diagrams of size $m\times p$. Let us give more details about this bijection. 
First, let us recall the following result from \cite{gll1}.
\begin{theorem}
\label{thm:cell1}
A real $m\times p$ matrix $M$ is 
totally nonnegative if and only if $\widetilde{M}$, the matrix obtained 
from $M$ at the end of the Cauchon reduction algorithm, is a nonnegative 
Cauchon matrix.
\end{theorem}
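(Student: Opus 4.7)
The plan is to prove both implications by induction on the step $r$ of the Cauchon reduction algorithm, exploiting the fact that each step is a single pivot-like transformation. Writing $r^{+}$ for the successor of $r$ in the lexicographic order on $E$, the algorithm passes from $M^{(r^{+})}$ to $M^{(r)}$ by subtracting, whenever the pivot $x_{j,\beta}^{(r^{+})}$ is nonzero, the quantity $x_{i,\beta}^{(r^{+})} (x_{j,\beta}^{(r^{+})})^{-1} x_{j,\alpha}^{(r^{+})}$ from each entry $x_{i,\alpha}^{(r^{+})}$ with $i<j$ and $\alpha<\beta$. This correction term is just the ratio of the $2\times 2$ minor $[i,j\mid\alpha,\beta](M^{(r^{+})})$ to the pivot, which is the source of all the combinatorics below.

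First I would set up a dictionary between minors of $M^{(r)}$ and minors of $M^{(r^{+})}$. The central identity, a Sylvester/Schur-complement computation, is that when the pivot is nonzero any minor of $M^{(r)}$ with row set $I\subseteq\gc 1,j-1\dc$ and column set $J\subseteq\gc 1,\beta-1\dc$ equals the minor of $M^{(r^{+})}$ on rows $I\cup\{j\}$ and columns $J\cup\{\beta\}$ divided by $x_{j,\beta}^{(r^{+})}$; minors that straddle the pivot or lie entirely outside the affected block are related by similar, only slightly messier, formulas. When the pivot is zero the definition gives $M^{(r)}=M^{(r^{+})}$, so the dictionary is trivial, but then one uses the preceding lemma (every tnn matrix is Cauchon) on a suitable submatrix to show that the ``missing'' entries which would have been updated are in fact already zero.

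For the forward direction, I would prove by induction on $r$ (starting at $r=(m+1,p)$ and decreasing in the lexicographic order) that, assuming $M$ is totally nonnegative, every entry of $M^{(r)}$ is nonnegative and every pivot encountered in the remaining steps is nonnegative. The Schur-complement dictionary expresses each entry of $M^{(r)}$ as a ratio of two minors of $M^{(r^{+})}$, and iterating the dictionary writes these as subtraction-free rational expressions in the minors of the original matrix $M$; nonnegativity of those minors then forces the claim. Once $\widetilde{M}$ is known to have nonnegative entries, the Cauchon zero pattern of $\widetilde{M}$ is obtained by applying the lemma ``every tnn matrix is Cauchon'' (which, by the same dictionary, still applies to $\widetilde{M}$ even though $\widetilde{M}$ itself need not be tnn).

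For the converse I would run the algorithm in reverse. Each step is invertible by the formula $x_{i,\alpha}^{(r^{+})}=x_{i,\alpha}^{(r)}+x_{i,\beta}^{(r)}(x_{j,\beta}^{(r)})^{-1}x_{j,\alpha}^{(r)}$, since the entries in the pivot's row and column are not modified by the step. Starting from a nonnegative Cauchon matrix $T=\widetilde{M}$, I would show by induction (now \emph{ascending} in $r$) that each minor of $M^{(r^{+})}$ is a subtraction-free polynomial in the entries of $M^{(r)}$, so the minors of $M$ come out nonnegative. The main obstacle in both directions is precisely the bookkeeping for the zero case: one must verify that whenever the pivot vanishes the Cauchon zero pattern forces the entries that would otherwise cause cancellations to vanish as well, so that the subtraction-free expansions remain valid and the induction propagates. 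This is exactly the combinatorial content of the definition of a Cauchon diagram, and it is what makes the whole scheme go through.
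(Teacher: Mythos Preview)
The paper does not give a self-contained proof of this theorem: it simply cites \cite[Theorem~4.1 and Corollary~B5]{gll1}. Your sketch is a reasonable outline of the kind of argument carried out in that reference, so in spirit you are reconstructing the paper's (cited) approach rather than proposing a genuinely different one.

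Two places in your outline are looser than they look. First, in the forward direction, nonnegativity of the entries of $\widetilde{M}$ does follow from the subtraction-free Schur-complement dictionary, but the Cauchon zero pattern of $\widetilde{M}$ does not follow from applying the lemma ``every tnn matrix is Cauchon'' to $\widetilde{M}$ itself, since $\widetilde{M}$ is in general not tnn. One has to maintain a stronger inductive invariant along the algorithm (for instance, that suitable minors of $M^{(r)}$ equal specified minors of $M$ divided by a product of already-verified positive pivots) and extract the Cauchon pattern from that; your phrase ``by the same dictionary, still applies'' is hiding precisely this extra work. Second, in the converse direction, the reverse step divides by the pivot, so the minors of $M^{(r^{+})}$ are subtraction-free \emph{Laurent} expressions in the entries of $M^{(r)}$, not polynomials; this is harmless because the pivot is strictly positive whenever it is nonzero, but it should be stated correctly. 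With these two repairs the plan is sound and aligns with the proof in \cite{gll1}.
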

\begin{proof}
This is the content of \cite[Theorem 4.1 and Corollary B5]{gll1}.
\end{proof}  

Thus, each totally nonnegative matrix is, via Cauchon 
reduction, associated to a Cauchon diagram. In other words, we have a mapping $\pi: M \mapsto C$ from $\mmptnn$ to the set  $\mathcal{C}_{m,p}$ of $m\times p$ Cauchon diagrams, where $C$ is the Cauchon diagram associated to the Cauchon matrix $\widetilde{M}$. 
Moreover, if $M$ and $N$ are two tnn matrices such that $\pi (M)=\pi (N)$, then it follows from \cite[Corollary 3.17]{gll1} that $M$ and $N$ belong to the same tnn cell. So, each nonempty tnn cell in $\mmptnn$ is a union of fibres of $\pi$. On the other hand, it follows from \cite{post} (see also \cite[Section 6]{gll1}) that the number of nonempty tnn cells in $\mmptnn$ is equal to the number of $m\times p$ Cauchon diagrams. Thus, the nonempty tnn cells are precisely the fibres of $\pi$, and we have just proved the following result.

\begin{theorem}
\label{thm:cell2}
\begin{enumerate}
\item Let $M$ and $N$ be two tnn matrices. Then $M$ and $N$ belong to the same tnn cell if and only if $\pi (M)=\pi (N)$, ie if and only if $\widetilde{M}$ and $\widetilde{N}$ are associated to the same Cauchon diagram.
\item Nonempty tnn cells are parametrised by Cauchon diagrams, and the nonempty tnn cells in  $\mmptnn$ are precisely the sets 
$$S^0_C:=\{ M \in \mmptnn ~|~ \pi(M)=C\},$$
where $C$ runs through the set of $m \times p$ Cauchon diagrams. 
\item Let $M \in \mmptnn$, and denote by $\widetilde{M}=(t_{i,\alpha})$ the matrix obtained 
from $M$ at the end of the Cauchon reduction algorithm. Then $M \in S^0_C$ if and only if $t_{i,\alpha} = 0$ if $(i,\alpha) \in C$ and  $t_{i,\alpha} > 0$ if $(i,\alpha) \notin C$.
\end{enumerate}
\end{theorem}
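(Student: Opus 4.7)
The plan is to prove (1) and (2) together by a counting argument, then deduce (3) directly from Theorem~\ref{thm:cell1} and the definition of $\pi$.

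First I would establish the easy direction of (1): if $\pi(M)=\pi(N)$ then $M$ and $N$ lie in the same tnn cell. This is precisely the content of \cite[Corollary~3.17]{gll1}, so the argument is just a citation. In particular, every nonempty tnn cell is a union of fibres of $\pi$, and distinct cells consist of disjoint unions of fibres. Next I would observe that $\pi$ is a well-defined surjection from $\mmptnn$ onto $\mathcal{C}_{m,p}$: well-defined because Theorem~\ref{thm:cell1} guarantees $\widetilde{M}$ is a nonnegative Cauchon matrix (so the associated Cauchon diagram is unambiguous), and surjective because given any $C\in \mathcal{C}_{m,p}$ one can take a nonnegative Cauchon matrix $T$ realising $C$ (e.g.\ the $0/1$ indicator matrix of the complement of $C$), and then reversing the Cauchon reduction algorithm starting from $T$ produces a matrix $M\in\mmptnn$ with $\widetilde{M}=T$.

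Now the counting step. The number of nonempty tnn cells in $\mmptnn$ is known to equal $|\mathcal{C}_{m,p}|$ by \cite{post} (see also \cite[Section~6]{gll1}). Since $\mmptnn$ is the disjoint union of the (nonempty) fibres of $\pi$, and each nonempty tnn cell is a nonempty union of such fibres, the only way the number of cells can equal the number of fibres is that each nonempty cell coincides with a single fibre. This simultaneously yields the converse direction of (1) and the parametrisation asserted in (2), and identifies each nonempty cell as $S^0_C=\pi^{-1}(C)$ for a unique $C\in\mathcal{C}_{m,p}$.

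Finally, for (3) I would simply unwind the definition: by Theorem~\ref{thm:cell1}, $\widetilde{M}=(t_{i,\alpha})$ is a nonnegative Cauchon matrix, so by the definition of the Cauchon diagram associated to a Cauchon matrix, $\pi(M)$ is the set of positions $(i,\alpha)$ with $t_{i,\alpha}=0$, and the remaining entries satisfy $t_{i,\alpha}>0$. Combining this with the characterisation $S^0_C=\pi^{-1}(C)$ from (2) gives (3).

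The only substantive ingredient is the counting equality from \cite{post}, together with \cite[Corollary~3.17]{gll1}; both are cited results from the literature, so I expect no real obstacle beyond carefully marshalling them. The main care needed is to verify that $\pi$ is indeed surjective, which is why I would include the explicit construction of a preimage via inverse Cauchon reduction.
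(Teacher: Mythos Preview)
Your proposal is correct and follows essentially the same route as the paper: cite \cite[Corollary~3.17]{gll1} for the easy direction of (1), invoke the counting result from \cite{post} (and \cite[Section~6]{gll1}) to force each cell to be a single fibre, and then read off (3) from Theorem~\ref{thm:cell1} and the definition of $\pi$. One minor remark: your explicit surjectivity argument via inverse Cauchon reduction is not strictly needed, since the inequalities $\#(\text{nonempty cells})\le \#(\text{nonempty fibres})\le |\mathcal{C}_{m,p}|$ together with the equality $\#(\text{nonempty cells})=|\mathcal{C}_{m,p}|$ already force surjectivity of $\pi$; but including it does no harm and makes the logic more transparent.
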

\begin{proof}
The first two statements follow from the above discussion. The third assertion follows from the fact that $\widetilde{M}$ is a nonnegative Cauchon matrix associated to $C$ if and only if $t_{i,\alpha} = 0$ if $(i,\alpha) \in C$ and  $t_{i,\alpha} > 0$ if $(i,\alpha) \notin C$.
\end{proof}

Let $C$ be a Cauchon diagram. The tnn cell $S^0_C$ is called the {\it tnn cell
associated to the Cauchon diagram $C$}.

As an example, we will consider the totally positive cell.
In order to reconcile the Gasca and Pe\~na approach with our approach, we need
to describe a symmetry of totally nonnegative matrices. The reason is that we
are using Cauchon reduction, which starts at the bottom right of a matrix and
moves along columns from right to left, whereas Gasca and Pe\~na use
techniques such as Neville elimination, which start at the top left and move
down columns. This makes no essential difference, except to the notation:
informally, reflection in the antidiagonal preserves total nonnegativity. 

 To be precise, let
$M=(a_{ij})$ be an $m\times p$ matrix and let $M^{\rho}$ be the $p\times m$
matrix defined by $(M^{\rho})_{ij}:= a_{m+1-j,\,p+1-i}$. Let $I$ and $J$ be
row and columns sets for $M^{\rho}$. Then standard properties of determinants
show that $[I|J](M^{\rho})=[m+1-J|p+1-I](M)$. It follows that $M$ is totally
nonnegative if and only if $M^{\rho}$ is totally nonnegative, and, similarly,
$M$ is totally positive if and only if $M^{\rho}$ is totally positive. (Here,
$x+I:=\{x+i~|~i\in I\}$.)

We say that a minor $[I|J]$ is a {\em final minor} of an $m\times p$ matrix if
each of $I$ and $J$ consists of a consecutive set of integers and either $m\in
I$ or $p\in J$. It is clear that initial minors of $M$
correspond to final minors of $M^{\rho}$; so the Gasca and Pe\~na criterion
can be reformulated as: an $m\times p$ matrix $M$ is totally positive if and
only if each of its $mp$ final minors is strictly positive.

If the Cauchon reduction algorithm is carried out on a totally positive 
matrix $M$ then one can calculate that $t_{i,\alpha}$, the $(i,\alpha)$ 
entry of 
$\widetilde{M}$, the matrix obtained at the end of the Cauchon reduction
algorithm, is given by
\[
t_{i,\alpha}=
[i,i+1,\dots,i+r~|~\alpha,\alpha+1,
\dots,\alpha+r](M) \cdot  [i+1,\dots,i+r~|~\alpha+1,
\dots,\alpha+r](M)^{-1},
\]
 where $r=\min\{m-i, p-\alpha\}$. Note that 
this formula only involves final minors of $M$. In particular, 
$\widetilde{M}$ is a strictly positive matrix and so the Cauchon diagram
corresponding to the totally positive cell has all boxes coloured 
white.  
Also, one can then easily calculate that for any final minor 
$[i,i+1,\dots,i+r~|~\alpha,\alpha+,1 
\dots,\alpha+r~](M)=t_{i,\alpha}t_{i+1,\alpha+1}\cdots t_{i+r,\alpha+r}$. 

In considering totally nonnegative cells other than the totally positive cell,
two problems arise: some of the $t_{i,\alpha}$ will be equal to zero, and some
of the (final) minors of the matrix $M$ will be equal to zero, and so cannot
be inverted. In seeking a criterion for recognition of these cells that is
similar to the Gasca and Pe\~na criterion for total positivity we were led to
consider certain sequences called {\em lacunary sequences} which were first
introduced by Cauchon in his study of quantum matrices, \cite{cauchon2}. These
sequences are described in the next section.

\section{Lacunary sequences}\label{section-lacunary}

\begin{mydef}
\label{def:LS}
Let $C$ be an $m \times p$ Cauchon diagram. We say that a sequence 
\[
((i_0,\alpha_0), (i_1,\alpha_1), ..., (i_t,\alpha_t))
\]
is a {\em lacunary sequence with respect to $C$} if the 
following conditions hold:
\begin{enumerate}
\item $t \geq 0$;
\item the boxes $(i_1,\alpha_1)$, $(i_2,\alpha_2)$, ..., $(i_t,\alpha_t)$ are white in $C$; 
\item $1 \leq i_0 < i_1 < \dots < i_t  \leq m$ and $1 \leq \alpha_0 < \alpha_1 < \dots < \alpha_t  \leq p$;
\item If $i_t < i \leq m$ and $\alpha_t < \alpha \leq p$, then $(i,\alpha)$ is a black box in $C$;
\item Let $s \in \{0, \dots ,t-1\}$. Then:
\begin{itemize}
\item either $(i,\alpha)$ is a black box in $C$ for all $i_s < i <i_{s+1}$ 
and $\alpha_s <\alpha$,
\item  or $(i,\alpha)$ is a black box in $C$ for all 
$i_s < i <i_{s+1}$ and $\alpha_0\leq\alpha < \alpha_{s+1}$; 
\end{itemize}
(See Figure \ref{fig:Condition5} where $x_s^+:=x_{s+1}$ and $x_k:=(i_k,\alpha_k)$.)
\item Let $s \in \{0, \dots ,t-1\}$. Then:
\begin{itemize}
\item either $(i,\alpha)$ is a black box in $C$ for all $i_s <i$ and $\alpha_s < \alpha <\alpha_{s+1}$,
\item  or $(i,\alpha)$ is a black box in $C$ for all $i <i_{s+1}$ and $\alpha_s < \alpha <\alpha_{s+1}$.
\end{itemize}
(See Figure \ref{fig:Condition6}.)
\end{enumerate}
\end{mydef}

\begin{example}
Consider the $3\times 3$ Cauchon diagram 
\begin{center}
\begin{tikzpicture}[xscale=0.6, yscale=0.6]
\node at (0,1.5) {$C=$};
\draw [fill=gray] (2.1,1.1) rectangle (2.9,1.9);
\draw [fill=gray] (2.1,0.1) rectangle (2.9,0.9);
\draw [fill=gray] (4.1,1.1) rectangle (4.9,1.9);
\draw [fill=gray] (4.1,2.1) rectangle (4.9,2.9);
\draw [help lines, black, thick] (2,0) grid   (5,3);
\end{tikzpicture}
\end{center}
It is easy to check that the sequence $((1,2), (3,3))$ is a lacunary sequence (with respect to $C$).
\end{example}

%In Figures~\ref{fig:Condition5} and \ref{fig:Condition6}, we illustrate Conditions 5 and 6 of Definition~\ref{def:LS}. 
%In particular, we set:
%$$\mbox{$x_s^+:=x_{s+1}$ and $x_k:=(i_k,\alpha_k)$.}$$
%\

\begin{figure}[h]
\begin{center}
\begin{tikzpicture}[xscale=0.55,yscale=0.55]

%\node at (12,12) {{\bf Condition 5}};

\draw [ultra thick,red] (1,8) -- (1,9) -- (2,9) -- (2,8) -- (1,8);
\node [red] at (1.5,8.5) {{\bf \mbox{$x_0$}}};

\node [ultra thick,red] at (2.5,7.5) {{\bf \mbox{$\ddots$}}};

\draw [ultra thick,red] (3,6) -- (3,7) -- (4,7) -- (4,6) -- (3,6);
\node [red] at (3.5,6.5) {{\bf \mbox{$x_s$}}};

\draw [ultra thick,red] (7,1) -- (7,2) -- (8,2) -- (8,1) -- (7,1);
\node [red] at (7.6,1.5) {{\bf \mbox{$x_{s}^+$}}};

\draw [fill=gray] (4,2) rectangle (10,6);

\draw[ultra thin]   (0,0) grid   (10,10);

\node at (12,5) {{\bf OR }};

\draw [ultra thick,red] (15,8) -- (15,9) -- (16,9) -- (16,8) -- (15,8);
\node [red] at (15.5,8.5) {{\bf \mbox{$x_0$}}};

\node [ultra thick,red] at (16.5,7.5) {{\bf \mbox{$\ddots$}}};

\draw [ultra thick,red] (17,6) -- (17,7) -- (18,7) -- (18,6) -- (17,6);
\node [red] at (17.5,6.5) {{\bf \mbox{$x_s$}}};

\draw [ultra thick,red] (21,1) -- (21,2) -- (22,2) -- (22,1) -- (21,1);
\node [red] at (21.6,1.5) {{\bf \mbox{$x_{s}^+$}}};

\draw [fill=gray] (15,2) rectangle (21,6);

\draw[ultra thin]  (14,0) grid   (24,10);

%\node at (10,-2) {where $x_s^+:=x_{s+1}$ and $x_k:=(i_k,\alpha_k)$.};

\end{tikzpicture}
\caption{Condition 5 of Definition \ref{def:LS} \label{fig:Condition5}}
\end{center}
\end{figure}

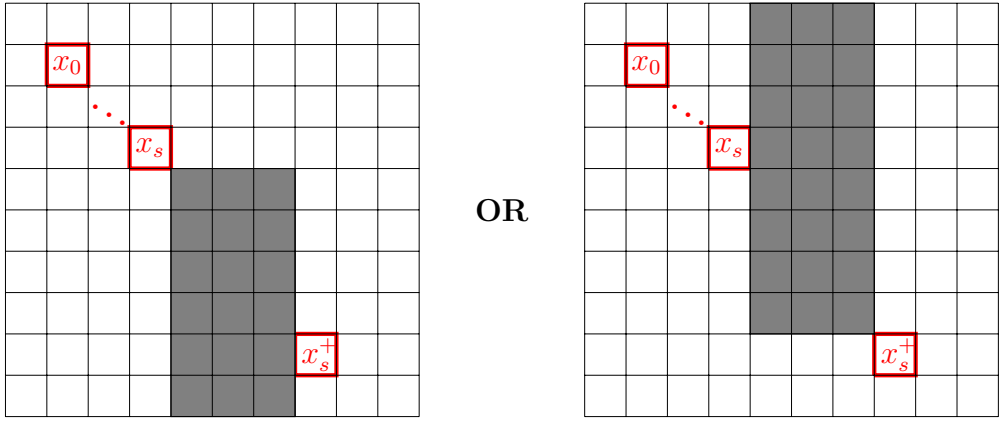
\begin{figure}[h]
\begin{center}
\begin{tikzpicture}[xscale=0.55,yscale=0.55]

%\node at (12,12) {{\bf Condition 6}};

\draw [ultra thick,red] (1,8) -- (1,9) -- (2,9) -- (2,8) -- (1,8);
\node [red] at (1.5,8.5) {{\bf \mbox{$x_0$}}};

\node [ultra thick,red] at (2.5,7.5) {{\bf \mbox{$\ddots$}}};

\draw [ultra thick,red] (3,6) -- (3,7) -- (4,7) -- (4,6) -- (3,6);
\node [red] at (3.5,6.5) {{\bf \mbox{$x_s$}}};

\draw [ultra thick,red] (7,1) -- (7,2) -- (8,2) -- (8,1) -- (7,1);
\node [red] at (7.6,1.5) {{\bf \mbox{$x_{s}^+$}}};

\draw [fill=gray] (4,0) rectangle (7,6);

\draw[ultra thin]   (0,0) grid   (10,10);

\node at (12,5) {{\bf OR }};

\draw [ultra thick,red] (15,8) -- (15,9) -- (16,9) -- (16,8) -- (15,8);
\node [red] at (15.5,8.5) {{\bf \mbox{$x_0$}}};

\node [ultra thick,red] at (16.5,7.5) {{\bf \mbox{$\ddots$}}};

\draw [ultra thick,red] (17,6) -- (17,7) -- (18,7) -- (18,6) -- (17,6);
\node [red] at (17.5,6.5) {{\bf \mbox{$x_s$}}};

\draw [ultra thick,red] (21,1) -- (21,2) -- (22,2) -- (22,1) -- (21,1);
\node [red] at (21.6,1.5) {{\bf \mbox{$x_{s}^+$}}};

\draw [fill=gray] (18,2) rectangle (21,10);

\draw[ultra thin]   (14,0) grid   (24,10);

%\node at (10,-2) {where $x_s^+:=x_{s+1}$ and $x_k:=(i_k,\alpha_k)$.};

\end{tikzpicture}
\caption{Condition 6 of Definition \ref{def:LS} \label{fig:Condition6}}
\end{center}
\end{figure}

In \cite{cauchon2}, lacunary sequences were studied in order to prove the normal
separation of the prime spectrum of the algebra of $m \times p$ quantum
matrices. Moreover, Cauchon studied the existence of such sequences. In
particular, he proved the following result. 

\begin{lemma}
\label{lemmaCauchon}
Let $(j,\beta) \in \gc 1, m \dc \times \gc 1,p \dc$. Assume there exists $j_0>j$ and $\beta_0 > \beta$ such that $(j_0,\beta_0)$ is a white box in $C$. Then there exists  $(l,\gamma) \in \gc 1, m \dc \times \gc 1, p \dc$ such that 
\begin{enumerate}
\item $l>j $ and $ \gamma > \beta$;
\item the box $(l,\gamma)$ is white in $C$; 
\item
\begin{itemize}
\item Either $(i,\alpha)$ is a black box in $C$ for all $j < i <l$ and $\beta <\alpha$,
\item  or $(i,\alpha)$ is a black box in $C$ for all $j < i <l$ and $\alpha < \gamma$; 
\end{itemize}
\item 
\begin{itemize}
\item Either $(i,\alpha)$ is a black box in $C$ for all $j <i$ and $\beta < \alpha <\gamma$,
\item  or $(i,\alpha)$ is a black box in $C$ for all $i <l$ and $\beta < \alpha <\gamma$.
\end{itemize}

\end{enumerate}
\end{lemma}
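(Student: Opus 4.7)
The plan is to choose $(l, \gamma)$ via a three-case analysis, based on the following structural consequence of the Cauchon condition, which I call the \emph{white corner property}: whenever $(a, b)$ and $(c, d)$ are white boxes in $C$ with $a < c$ and $b > d$, the box $(c, b)$ is also white. The proof is a direct contradiction: if $(c, b)$ were black, the Cauchon condition would require either every box above it in column $b$ to be black (contradicting $(a, b)$ white) or every box left of it in row $c$ to be black (contradicting $(c, d)$ white).

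Call a white box $(i, \alpha)$ a \emph{column blocker} if $i \leq j$ and $\alpha > \beta$, and a \emph{row blocker} if $i > j$ and $\alpha \leq \beta$. When no column blocker exists, the choice $l = l_A$ (the smallest row $> j$ with a white box at column $> \beta$) and $\gamma = \gamma_A$ (the smallest such column in row $l_A$) satisfies the first clause of condition~3 by minimality of $l_A$, and the second clause of condition~4 because the rectangle of rows $< l_A$ and columns $(\beta, \gamma_A)$ is blocker-free: rows $\leq j$ by hypothesis, and rows $(j, l_A)$ by minimality of $l_A$. Symmetrically, when no row blocker exists, the dual choice (Option~B) works. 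The main case is when both types of blockers exist.

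In the main case, let $\alpha_c$ be the smallest column containing a column blocker $(i_c, \alpha_c)$, and pick any row blocker $(i_r, \alpha_r)$. The white corner property applied to $(i_c, \alpha_c)$ and $(i_r, \alpha_r)$ produces a white $(i_r, \alpha_c)$ with $i_r > j$, so column $\alpha_c$ contains a white box in some row $> j$. Now let $\gamma$ be the smallest column $> \beta$ satisfying: (i) column $\gamma$ contains a white box at some row $> j$; (ii) no white box lies in rows $\leq j$ at columns in $(\beta, \gamma)$; and (iii) no white box lies in rows $(j, l_\gamma)$ at columns $\leq \beta$, where $l_\gamma$ is the smallest row $> j$ with $(l_\gamma, \gamma)$ white. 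One checks that $\alpha_c$ satisfies (i)--(iii); in particular, (iii) follows from the white corner property, since any row blocker in $(j, l_{\alpha_c})$ combined with $(i_c, \alpha_c)$ would force a white box at column $\alpha_c$ strictly above row $l_{\alpha_c}$, contradicting minimality. Hence $\gamma$ exists with $\gamma \leq \alpha_c$, and I set $l = l_\gamma$. The main obstacle is verifying that this $(l, \gamma)$ satisfies condition~3 via clause $B$ and condition~4 via clause $C_2$: any counterexample white box $(i, \alpha)$ with $i < l$ and $\alpha < \gamma$ would either directly contradict (ii) (if $i \leq j$ and $\alpha > \beta$) or (iii) (if $i > j$ and $\alpha \leq \beta$), or else would yield a smaller column $\alpha < \gamma$ satisfying (i)--(iii), contradicting the minimality of $\gamma$. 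This last verification is the delicate part of the argument, and is where the interplay between the minimality of $\gamma$ and the white corner property must be orchestrated carefully.
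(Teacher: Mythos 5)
Your proof is correct and takes essentially the same route as the paper's: the same three-way case split (your ``no column blocker'' and ``no row blocker'' cases are the paper's Cases 2 and 1, with identical choices of $(l,\gamma)$), and your white corner property is precisely the Cauchon-diagram propagation the paper uses in Case 3, both to see that the box $(k,\gamma)$ is white and to rule out white boxes in the two forbidden regions. The only difference is cosmetic: in the main case the paper simply takes $\gamma$ to be the smallest column $>\beta$ containing a white box in rows $\leq j$ (your $\alpha_c$) and $l$ the smallest row $>j$ with $(l,\gamma)$ white, so your extra minimization over columns satisfying (i)--(iii) is sound but unnecessary, since $\alpha_c$ itself already satisfies conditions 3 and 4 by the same propagation argument you use to check that $\alpha_c$ lies in your candidate set.
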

\begin{proof}
We give a proof of this lemma for the convenience of the reader. We distinguish between three cases.

{\bf Case 1:} The first $\beta$ columns of the Cauchon diagram $C^{r}$ obtained from $C$ by deleting the first $j$ rows are black. By assumption, there is a white box in $C^r$. Let $\gamma$ be the smallest column-index of a non-black column in $C^r$. Hence $\gamma > \beta$. We know that there is a white box in the column $\gamma$ of $C^r$, so 
$$l:= \min \{k ~|~j<k \mbox{ and the box $(k,\gamma)$ is white}\} $$
exists and $j<l \leq m$.

\begin{figure}[hp]
   \begin{minipage}[c]{.46\linewidth}
   
\parbox{5ex}{~~~~~}
\parbox{35ex}{
\begin{tikzpicture}[xscale=0.5,yscale=0.5]

\draw[help lines]   (2,2) grid   (12,12);
\node at (5.5, 12.5) {$\beta$};
\node at (1.5, 9.5) {$j$};

\node [blue] at (8.5, 12.5) {$\gamma$};
\node [blue] at (1.5, 5.5) {$l$};

\draw (1,10) -- (12,10);
\draw (1,9) -- (12,9);

\draw (5,2) -- (5,13);
\draw (6,2) -- (6,13);

\draw [fill=darkgray] (2,2) rectangle (6,9);
\draw [fill=gray] (6,2) rectangle (8,9);
\draw [fill=gray] (8,6) rectangle (9,9);

\draw [ultra thick,red] (2,2) rectangle (12,9);

\node [blue] at (8.5,5.5) {{\bf W}};
\draw [blue]  (8,13) -- (8,5);
\draw [blue]  (9,13) -- (9,5);

\draw [blue] (1,5) -- (9,5);
\draw [blue] (1,6) -- (9,6);

\end{tikzpicture}
}
   \end{minipage} \hfill
   \begin{minipage}[c]{.46\linewidth}
   \parbox{35ex}{ In this picture, the area in red corresponds to the Cauchon
   diagram $C^r$, and the box marked with a W is white.}\parbox{5ex}{~~~~~}
   \end{minipage}
   \caption{Proof of Lemma \ref{lemmaCauchon}, Case 1.}
   \label{figure:case1}
\end{figure}

We claim that $(l,\gamma)$ satisfies the properties of the lemma. Indeed, if
$j < i <l$ and $\alpha < \gamma$, then the box $(i,\alpha)$ is black as by
construction the column indexed $\alpha$ of $C^r$ is black. So the second
assertion of 3. is satisfied. 

Similarly, if $j <i$ and $\beta < \alpha <\gamma$, then the box $(i,\alpha)$
is black, so that the first assertion of 4. is satisfied. \\

%\newpage 

{\bf Case 2:} The first $j$ rows of the Cauchon diagram $C^{c}$ obtained from
$C$ by deleting the first $\beta$ columns are black. By assumption, there is a
white box in $C^c$. Let $l$ be the smallest row-index of a non-black row in
$C^c$. Hence $l > j$. We know that there is a white box in the row $l$ of
$C^c$, so $$\gamma:= \min \{\eta ~|~\beta<\eta \mbox{ and the box $(l,\eta)$
is white}\} $$ exists and $\beta<\gamma \leq p$.

%\newpage 

\begin{figure}[hp]
   \begin{minipage}[c]{.46\linewidth}
   
\parbox{5ex}{~~~~~}
\parbox{35ex}{

\begin{tikzpicture}[xscale=0.5,yscale=0.5]

\draw[help lines]   (2,2) grid   (12,12);
\node at (5.5, 12.5) {$\beta$};
\node at (1.5, 9.5) {$j$};

\node [blue] at (8.5, 12.5) {$\gamma$};
\node [blue] at (1.5, 5.5) {$l$};

\draw (1,10) -- (12,10);
\draw (1,9) -- (12,9);

\draw (5,2) -- (5,13);
\draw (6,2) -- (6,13);

\draw [fill=darkgray] (6,9) rectangle (12,12);
\draw [fill=gray] (6,6) rectangle (12,9);
\draw [fill=gray] (6,5) rectangle (8,6);

\draw [ultra thick,red] (6,2) rectangle (12,12);

\node [blue] at (8.5,5.5) {{\bf W}};
\draw [blue]  (8,13) -- (8,5);
\draw [blue]  (9,13) -- (9,5);

\draw [blue] (1,5) -- (9,5);
\draw [blue] (1,6) -- (9,6);

\end{tikzpicture}
}

   \end{minipage} \hfill
   \begin{minipage}[c]{.46\linewidth}
   \parbox{35ex}{ In this picture, the area in red corresponds to the Cauchon
   diagram $C^c$, and the box marked with a W is white.}\parbox{5ex}{~~~~~}
   \end{minipage}
   \caption{Proof of Lemma \ref{lemmaCauchon}, Case 2.}
   \label{figure:case2}
\end{figure}

%\newpage 

We claim that $(l,\gamma)$ satisfies the properties of the lemma. Indeed, if  $j < i <l$ and $\beta < \alpha$, then the box $(i,\alpha)$ is black as by construction the row indexed $i$ of $C^c$ is black. So the first assertion of 3. is satisfied. 

Similarly, if $i<l$ and $\beta < \alpha <\gamma$, then the box $(i,\alpha)$ is
black, so that the second assertion of 4. is satisfied. \\

%\newpage 

{\bf Case 3:} The first $\beta$ columns of the Cauchon diagram $C^{r}$ obtained from $C$ by deleting the first $j$ rows are not entirely black, and the first $j$ rows of the Cauchon diagram $C^{c}$ obtained from $C$ by deleting the first $\beta$ columns are not entirely black. 

We set 
$$k:= \min \{s ~|~j<s \mbox{ and there exists $1\leq \eta \leq \beta$ such that the box $(s,\eta)$ is white}\} $$
and 
$$\gamma := \min \{\eta ~|~\beta< \eta \mbox{ and there exists $1\leq s \leq j$ such that the box $(s,\eta)$ is white}\} .$$
We have $j < k \leq m$ and $\beta < \gamma \leq p$. 

Observe that the box $(k,\gamma)$ is also white because $C$ is a Cauchon diagram. Now we can set 
$$l:=  \min \{s ~|~j<s \mbox{ and the box $(s,\gamma)$ is white}\} .$$

%\newpage

\begin{figure}[hp]
   \begin{minipage}[c]{.46\linewidth}
   
\parbox{5ex}{~~~~~}
\parbox{35ex}{
\begin{tikzpicture}[xscale=0.55,yscale=0.55]

\draw[help lines]   (2,2) grid   (12,12);
\node at (5.5, 12.5) {$\beta$};
\node at (1.5, 9.5) {$j$};

\node [blue] at (8.5, 12.5) {$\gamma$};
\node [blue] at (1.5, 5.5) {$l$};
\node [blue] at (1.5, 3.5) {$k$};

\draw (1,10) -- (12,10);
\draw (1,9) -- (12,9);

\draw (5,2) -- (5,13);
\draw (6,2) -- (6,13);

\draw [fill=darkgray] (6,9) rectangle (8,12);
\draw [fill=darkgray] (8,11) rectangle (9,12);
\draw [fill=darkgray] (2,3) rectangle (4,4);

\draw [fill=gray] (8,6) rectangle (9,9);
\draw [fill=lightgray] (6,6) rectangle (8,9);

\draw [fill=darkgray] (2,4) rectangle (6,9);

\draw [ultra thick,red] (6,9) rectangle (12,12);
\draw [ultra thick,red] (2,2) rectangle (6,9);

\node [blue] at (8.5,10.5) {{\bf $W_0$}};
\node [blue] at (8.5,3.5) {{\bf $W_1$}};
\node [blue] at (8.5,5.5) {{\bf $W_2$}};
\node [blue] at (4.5,3.5) {{\bf $W_0$}};
\draw [blue]  (8,13) -- (8,3);
\draw [blue]  (9,13) -- (9,3);

\draw [blue] (1,5) -- (9,5);
\draw [blue] (1,6) -- (9,6);

\draw [blue] (1,3) -- (9,3);
\draw [blue] (1,4) -- (9,4);

\end{tikzpicture}
}
   \end{minipage} \hfill
   \begin{minipage}[c]{.46\linewidth}
   \parbox{35ex}{
    In this picture, the areas in red correspond to the two areas containing a
    white box, and the boxes marked with W are white.}\parbox{5ex}{~~~~~}
   \end{minipage}
   \caption{Proof of Lemma~\ref{lemmaCauchon}, Case 3.}
   \label{figure:case3}
\end{figure}

We claim that $(l,\gamma)$ satisfies the properties of the lemma. Indeed,
if $j < i <l$ and $\alpha < \gamma$, then the box $(i,\alpha)$ is black.
Otherwise, $(i,\alpha)$ is white and by construction of $\gamma$ there exists
$1\leq s \leq j$ such that the box $(s,\gamma)$ is white. As $C$ is a Cauchon
diagram, this would force $(i,\gamma)$ to be white, contradicting the
minimality of $l$. Hence, $(i,\alpha)$ is black, and the second assertion of
3. is satisfied. 

Now, assume that $i<l$ and $\beta < \alpha <\gamma$. We distinguish between
two cases to show that the box $(i,\alpha)$ is black. If $j<i$, then the same
argument as above shows that $(i,\alpha)$ is black. Indeed, otherwise
$(i,\alpha)$ is white and by construction of $\gamma$ there exists $1\leq s
\leq j$ such that the box $(s,\gamma)$ is white. As $C$ is a Cauchon diagram,
this would force $(i,\gamma)$ to be white, contradicting the minimality of
$l$. Hence, $(i,\alpha)$ is black. Finally, if $i\leq j$, then by construction
of $\gamma$, the box $(i,\alpha)$ is black. Hence, the second assertion of 4.
is satisfied. 
\end{proof}

Fix $(j,\beta) \in \gc 1, m \dc \times \gc 1, p \dc$. The proof of the previous lemma
actually gives us an algorithmic way to produce a lacunary sequence starting
at $(j,\beta)$. 

%\newpage 

\begin{algorithm}
\caption{Algorithm constructing a lacunary sequence with respect to $C$ from any box}
\label{algo1}

\begin{algorithmic}
\State {\bf input:} Let $C$ be a Cauchon diagram, and fix $(j,\beta) \in \gc 1, m \dc \times \gc 1, p \dc$.
\State {\bf output:} a lacunary sequence $((i_0,\alpha_0), (i_1,\alpha_1), ..., (i_t,\alpha_t))$ with respect to $C$ such that $(i_0,\alpha_0)=(j,\beta).$
\State {\bf start:} $i_0:=j$, $\alpha_0:=\beta$, $t:=0$, $i:=i_t$, $\alpha:=\alpha_t$ 
\While{ the Cauchon diagram $C_{i,\alpha}$ deduced from $C$ by deleting the first $i$ rows and the first $\alpha$ columns 
is not completely black}
\If{  the first $\alpha$ columns of the Cauchon diagram $C^{i}$ obtained from $C$ by deleting the first $i$ rows are black} 
\State {$\alpha_{t+1}:=$the smallest column-index of a non-black column in $C^{i}$ }
\State { {\bf and} $i_{t+1}:= \min \{k ~|~i<k \mbox{ and the box $(k,\alpha_{t+1})$ is white}\} $}
\Else \If {the first $i$ rows of the Cauchon diagram $C^{\alpha}$ obtained from $C$ by deleting the first $\alpha$ columns are black}
\State{ $i_{t+1}:=$ the smallest row-index of a non-black row in $C^{\alpha}$}
\State { {\bf and} $\alpha_{t+1}:= \min \{\eta ~|~\alpha<\eta \mbox{ and the box $(i_{t+1},\eta)$ is white}\} $}
\Else  \State {$\alpha_{t+1} := \min \{\eta ~|~\alpha< \eta \mbox{ and there exists $1\leq s \leq i$ such that the box $(s,\eta)$ is white}\}$}
\State {{\bf and} $i_{t+1}:=  \min \{s ~|~i<s \mbox{ and the box $(s,\alpha_{t+1})$ is white}\} $}
 \EndIf  
 \EndIf  
 \State { $t:=t+1$.}
 \EndWhile  
    
\noindent \Return   $((i_0,\alpha_0),(i_1,\alpha_1), ..., (i_t,\alpha_t))$.
\end{algorithmic}
\end{algorithm}

\newpage

\begin{corollary}
Let $(j,\beta) \in \gc 1, m \dc \times \gc 1, p \dc$. Then there exists a lacunary sequence 
$$((i_0,\alpha_0),(i_1,\alpha_1), ..., (i_t,\alpha_t))$$ with respect to $C$ with $$(i_0,\alpha_0)=(j,\beta).$$
\end{corollary}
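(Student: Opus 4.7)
The plan is to prove the corollary by appealing directly to Algorithm~\ref{algo1} and showing that (i) the algorithm terminates, and (ii) the output sequence satisfies all six conditions of Definition~\ref{def:LS}. The work is already essentially done by Lemma~\ref{lemmaCauchon}, so the corollary will be a clean bookkeeping argument.

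First I would initialise $(i_0,\alpha_0):=(j,\beta)$ and then argue termination. At each pass of the while loop, provided the subdiagram $C_{i_t,\alpha_t}$ is not entirely black, Lemma~\ref{lemmaCauchon} (applied with the current pair $(i_t,\alpha_t)$ playing the role of $(j,\beta)$) guarantees a pair $(i_{t+1},\alpha_{t+1})$ with $i_{t+1}>i_t$ and $\alpha_{t+1}>\alpha_t$ such that the box $(i_{t+1},\alpha_{t+1})$ is white and such that the two dichotomies of Lemma~\ref{lemmaCauchon}(3)--(4) hold between $(i_t,\alpha_t)$ and $(i_{t+1},\alpha_{t+1})$. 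Since the indices strictly increase in $\llbracket 1,m\rrbracket\times\llbracket 1,p\rrbracket$, the loop must exit after at most $\min\{m-j,p-\beta\}$ iterations, producing a finite sequence $((i_0,\alpha_0),(i_1,\alpha_1),\dots,(i_t,\alpha_t))$.

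Next I would verify the six conditions of Definition~\ref{def:LS} in turn. Condition (1) is trivial; (2) holds by construction of $(i_{s+1},\alpha_{s+1})$ in Lemma~\ref{lemmaCauchon}; (3) is the strict monotonicity noted above. Condition (4) is exactly the loop-termination criterion: the while loop exits precisely when the subdiagram $C_{i_t,\alpha_t}$ obtained by deleting the first $i_t$ rows and first $\alpha_t$ columns is completely black, which is the content of (4). For conditions (5) and (6), the key observation is that the pair $(i_{s+1},\alpha_{s+1})$ produced at iteration $s$ satisfies precisely the two conclusions (3) and (4) of Lemma~\ref{lemmaCauchon} applied at $(i_s,\alpha_s)$. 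This matches condition (6) of Definition~\ref{def:LS} verbatim. For condition (5), one small point needs to be checked: Lemma~\ref{lemmaCauchon}(3) gives either blackness for all $\alpha>\alpha_s$ or blackness for all $\alpha<\alpha_{s+1}$, while Definition~\ref{def:LS}(5) asks for $\alpha_0\le \alpha<\alpha_{s+1}$ in the second alternative; but since $\alpha_0\le\alpha_s<\alpha_{s+1}$, the weaker restriction $\alpha_0\le\alpha<\alpha_{s+1}$ is implied, so (5) follows.

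The main potential obstacle is this mild discrepancy between the conclusions of Lemma~\ref{lemmaCauchon} (which are phrased in terms of the immediate predecessor $(i_s,\alpha_s)$ only) and condition (5) of Definition~\ref{def:LS} (which refers to $\alpha_0$). As explained, this is resolved by noting that the relevant range of $\alpha$ in Lemma~\ref{lemmaCauchon} is larger than the one in Definition~\ref{def:LS}, so blackness on the larger range forces blackness on the smaller. With this observation in hand, all six conditions of Definition~\ref{def:LS} hold, and the sequence output by Algorithm~\ref{algo1} is a lacunary sequence with respect to $C$ beginning at $(j,\beta)$, which is what was to be shown.
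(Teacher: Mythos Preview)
Your proposal is correct and takes exactly the same approach as the paper: invoke Algorithm~\ref{algo1} and use Lemma~\ref{lemmaCauchon} to justify that the output is a lacunary sequence starting at $(j,\beta)$. The paper's own proof is a single sentence to this effect; you have simply unpacked the verification of termination and of conditions (1)--(6) of Definition~\ref{def:LS}, including the observation that the second alternative of Lemma~\ref{lemmaCauchon}(3) gives blackness on the larger range $\alpha<\alpha_{s+1}$ and hence on the required subrange $\alpha_0\le\alpha<\alpha_{s+1}$.
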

\begin{proof}
Algorithm \ref{algo1} produces such a lacunary sequence thanks to the previous lemma.
\end{proof}

\begin{example}
Consider the Cauchon diagram
\begin{center}
\begin{tikzpicture}[xscale=0.6, yscale=0.6]
\node at (0,1.5) {$C=$};
\draw [fill=gray] (2.1,1.1) rectangle (2.9,1.9);
\draw [fill=gray] (3.1,1.1) rectangle (3.9,1.9);
\draw [fill=gray] (4.1,2.1) rectangle (4.9,2.9);
\draw [help lines, black, thick] (2,0) grid   (5,3);
\end{tikzpicture}
\end{center}
Applying the above algorithm starting from the box $(1,2)$, we obtain the lacunary sequence $((1,2),(2,3))$. 
Note that this is not the only lacunary sequence starting at $(1,2)$. Indeed, one can easily check that 
$((1,2),(3,3))$ is another lacunary sequence starting at $(1,2)$.
\end{example}

\begin{remark}
\label{rk:LS}
%\begin{enumerate}
 If $j=m$ or $\beta=p$, then the only lacunary sequence (with respect to any $m\times p$ Cauchon diagram) 
is $((j,\beta ))$. 
%\item Fix a Cauchon diagram $C$ and $(j,\beta) \in \gc 1, m \dc \times \gc 1, p \dc$. 
%There may be several lacunary sequences (with respect to $C$) that start 
%at $(j,\beta)$. For example, let 
%$C=$\begin{tabular}{|p{0.30cm}|p{0.30cm}|p{0.30cm}|}
%\hline
 %&  \gray & \gray 
 %\\
%\hline
%\gray & \gray & 
 %\\
%\hline
%\gray &  &  \\
%\hline
%\end{tabular} and $(j,\beta)=(1,1)$. One easily checks that 
%$(1,1),~ (2,3)$
%and 
%$(1,1),~ (3,2)$ and $(1,1),~(3,3)$ 
%are three lacunary sequences starting at $(1,1)$. 
%\end{enumerate}
\end{remark}

\section{Recognition}\label{section-recognition}

Fix a Cauchon diagram $C$. If $(j,\beta) \in \gc 1, m \dc \times \gc 1, p \dc$, then we
denote by $ \mathrm{LC}_{j,\beta}$ the set of all lacunary sequences (for $C$)
that starts at $(j,\beta)$. The previous section shows that $
\mathrm{LC}_{j,\beta} \neq \emptyset$. If
$(\underline{j},\underline{\beta}):=((j_0,\beta_0), (j_1,\beta_1), ...,
(j_t,\beta_t)) \in \mathrm{LC}_{j,\beta}$ is a lacunary sequence starting at
$(j,\beta)$, then we set: $$\Delta_{\underline{j},\underline{\beta}}^C:=
[j_0,j_1,\dots ,j_t ~|~\beta_0 , \beta_1 , \dots , \beta_t] \in \pmmpc .$$

\begin{proposition}
\label{proposition:a-or-b}
Let $M \in \mc_{m,p}(\bbR)$. For all $(j,\beta) \in \gc 1,m\dc \times \gc 1,p \dc$, fix a
lacunary sequence $(\underline{j},\underline{\beta})=((j_0,\beta_0),
(j_1,\beta_1), ..., (j_d,\beta_d)) \in \mathrm{LC}_{j,\beta}$, and set
$\Delta_{j,\beta}^C := \Delta_{\underline{j},\underline{\beta}}^C$.
Assume that either (a) or (b) below is true.\\

(a) The matrix $M$ is tnn and belongs to the tnn cell $S^0_C$ associated to $C$.\\

(b) For all $(j,\beta)$, we have $\Delta_{j,\beta}^C(M) =0$ if
$(j,\beta) \in C$ and $\Delta_{j,\beta}^C(M) > 0$ if $(j,\beta) \notin C$.
\\

Then, 
$$\Delta_{\underline{i},\underline{\alpha}}^C(M)=t_{i_0,\alpha_0} \cdot
t_{i_1,\alpha_1} \cdot\;\cdots \;\cdot t_{i_t,\alpha_t}$$ for all 
lacunary sequences $(\underline{i},\underline{\alpha})=((i_0,\alpha_0), (i_1,\alpha_1), ..., (i_t,\alpha_t))$.
\end{proposition}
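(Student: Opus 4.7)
The approach is to induct on the length $t$ of the lacunary sequence $(\underline{i},\underline{\alpha})=((i_0,\alpha_0),\ldots,(i_t,\alpha_t))$, treating the two hypotheses in parallel.

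For the base case $t=0$, the claim reduces to the identity $x_{i_0,\alpha_0}(M)=t_{i_0,\alpha_0}$. Condition~(4) of Definition~\ref{def:LS} forces every $(i,\alpha)$ with $i_0<i$ and $\alpha_0<\alpha$ to lie in $C$. The entry at position $(i_0,\alpha_0)$ is altered by the Cauchon reduction only at steps $(j,\beta)$ with $j>i_0$ and $\beta>\alpha_0$, and the pivot at such a step is precisely $t_{j,\beta}$. Under hypothesis~(a), Theorem~\ref{thm:cell2} guarantees $t_{j,\beta}=0$ for these positions, so each such step is trivial and the entry survives the entire algorithm unchanged. Under hypothesis~(b), I would run a reverse-lexicographic sub-induction on $(j,\beta)$: the length-zero sequence $((j,\beta))$ is itself lacunary whenever condition~(4) is in force there, so combining the vanishing of the chosen minor $\Delta^C_{j,\beta}(M)$ with the positions already handled by the sub-induction propagates $x_{j,\beta}(M)=t_{j,\beta}$ down to $(i_0,\alpha_0)$.

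For the inductive step $t\geq 1$, a routine check shows that the truncated sequence $((i_1,\alpha_1),\ldots,(i_t,\alpha_t))$ is still a lacunary sequence with respect to $C$: conditions~(1)--(4) are inherited verbatim, while conditions~(5) and (6) for the truncation are implied by the originals because, for example, the lower bound $\alpha_0\leq\alpha$ in condition~(5) is replaced by the stronger $\alpha_1\leq\alpha$. The inductive hypothesis then reduces the whole proposition to the key identity
\[
[i_0,i_1,\ldots,i_t\mid\alpha_0,\alpha_1,\ldots,\alpha_t](M) = t_{i_0,\alpha_0}\cdot[i_1,\ldots,i_t\mid\alpha_1,\ldots,\alpha_t](M).
\]
I would prove this by iterating a Schur-complement-type identity
\[
[I\cup\{j\}\mid J\cup\{\beta\}](M^{(r^+)}) = x^{(r^+)}_{j,\beta}\cdot[I\mid J](M^{(r)}),
\]
valid for $I\subseteq\{1,\ldots,j-1\}$ and $J\subseteq\{1,\ldots,\beta-1\}$ when the pivot is nonzero; this follows from the update rule in Convention~\ref{alg1} by Laplace expansion along row $j$ and column $\beta$. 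Applied along the Cauchon steps $(j,\beta)>(i_0,\alpha_0)^+$, steps with vanishing pivot leave the relevant minor unchanged, while the nonzero-pivot steps peel off factors whose telescoping is governed by conditions~(5) and~(6) of Definition~\ref{def:LS}.

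The main obstacle is exactly this bookkeeping: pinpointing which Cauchon steps in the region strictly below-right of $(i_0,\alpha_0)$ carry nonzero pivots, and showing that their Schur-complement factors collect cleanly into $t_{i_0,\alpha_0}$ times the truncated minor. Conditions~(5) and~(6), despite their intricate appearance, were designed precisely for this combinatorial orchestration. Under hypothesis~(a) the nonzero-pivot pattern is read off directly from the complement of $C$ via Theorem~\ref{thm:cell2}(3); under hypothesis~(b) the same information has to be recovered inductively from the vanishing of the chosen minors $\Delta^C_{j,\beta}(M)$, which is what forces the inductions on $t$ and on $(j,\beta)$ to be interleaved throughout the argument.
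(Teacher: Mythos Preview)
Your architecture is in the same spirit as the paper's --- track the lacunary minor through the Cauchon reduction steps and use the Schur-complement identity at the pivots --- but there is a genuine gap in the inductive step. Your identity
\[
[I\cup\{j\}\mid J\cup\{\beta\}](M^{(r^+)}) = x^{(r^+)}_{j,\beta}\cdot[I\mid J](M^{(r)})
\]
only applies when $(j,\beta)$ is the bottom-right corner of the current minor; in the course of the reduction this happens precisely at the steps $(i_k,\alpha_k)$ on the lacunary path, and it peels off $t_{i_t,\alpha_t}$ first, then $t_{i_{t-1},\alpha_{t-1}}$, and so on --- not $t_{i_0,\alpha_0}$ first as your ``key identity'' would require. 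More seriously, between successive steps $(i_{k+1},\alpha_{k+1})$ and $(i_k,\alpha_k)$ there are in general many Cauchon steps $(k',\eta')$ at \emph{white} boxes that are \emph{not} on the lacunary path, and at those steps the running minor $[i_0,\dots,i_s\mid\alpha_0,\dots,\alpha_s](M^{(k',\eta')})$ can in principle change. Showing that it does not change is exactly where conditions (5) and (6) do their real work, and it is not a telescoping argument: in the paper this is a five-case analysis, and the hardest case (a white $(k',\eta')$ with $i_s<k'\le i_{s+1}$ and $\alpha_h<\eta'<\alpha_{h+1}$) requires constructing auxiliary lacunary sequences starting at $(i,\eta')$, invoking the induction hypothesis on those, and then applying Sylvester's identity to force a certain auxiliary minor to vanish. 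None of this mechanism appears in your sketch.

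A secondary issue is the induction structure under hypothesis (b). To know which pivots vanish you need $t_{j',\beta'}=0\Leftrightarrow(j',\beta')\in C$ for all $(j',\beta')$ larger than the current starting box, and that comes from applying the proposition to the \emph{fixed} sequences $(\underline{j'},\underline{\beta'})$, which can have arbitrary length. So an induction on $t$ alone cannot drive the argument; the primary induction has to be reverse-lexicographic on the starting box $(j,\beta)$ (as in the paper), with the length-truncation used only as a convenience once that is in place. You hint at this interleaving but do not set it up, and once you do, the induction on $t$ becomes redundant.
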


\begin{proof}
We prove by a decreasing induction on
$(j,\beta)\in \gc 1,m \dc \times \gc 1,p \dc$ (with respect to the
lexicographic order) that 

    \begin{itemize}

\item[(i)] For all $(l,\delta) \geq (j,\beta)$, then $t_{l,\delta} \geq 0$ and
$t_{l,\delta} = 0$ if and only if $(l,\delta)\in C$;
\item[(ii)] For all lacunary sequences
$(\underline{i},\underline{\alpha})=((i_0,\alpha_0), (i_1,\alpha_1), ...,
(i_t,\alpha_t)) \in \mathrm{LC}_{j,\beta}$, and for all $(k,\eta)\in E^0$
with $(k,\eta) \geq (j,\beta)$, we have
\[
\Delta_{\underline{i},\underline{\alpha}}^C(M)= [i_0,\dots, i_s ~|~\alpha_0,
 \dots, \alpha_s](M^{(k,\eta)}) t_{i_{s+1},\alpha_{s+1}} \cdots
t_{i_t,\alpha_t},
\]
where $(i_s,\alpha_s) < (k,\eta) \leq (i_{s+1},\alpha_{s+1})$. 

(By convention, if $s=-1$ (ie if $(k,\eta)=(j,\beta)=(i_0,\alpha_0)$), then we
set $$[i_0,\dots, i_s ~|~\alpha_0, \dots, \alpha_s](M^{(k,\eta)}) = 1.$$ On
the other hand, if $s=t$, then we set $(i_{t+1},\alpha_{t+1})=(m+1,p+1)$.)
    \end{itemize}

Note that condition (i) is automatically satisfied under assumption (a) 
thanks to Theorem \ref{thm:cell2}.

As observed in Remark \ref{rk:LS}, the only lacunary sequence starting at
$(m,\beta)$ is $((m,\beta))$. So it is clear that
$\Delta_{\underline{i},\underline{\alpha}}^C(M)=\Delta_{m,\beta}^C(M)
=x_{m,\beta}=t_{m,\beta}$ for all lacunary sequences starting at $(m,\beta)$.
This proves (ii) in the case where $j=m$. Moreover, under assumption (b), as
$\Delta_{j,\beta}^C(M) =0$ if $(j,\beta) \in C$ and $\Delta_{j,\beta}^C(M) >
0$ if $(j,\beta) \notin C$, we obtain that $t_{m,\beta} =0$ if $(m,\beta) \in
C$ and $t_{m,\beta} > 0$ if $(m,\beta) \notin C$, which proves (i) in the case
where $j=m$. 

Let $j<m$, and assume (i) and (ii) are true for all $(j',\beta')>(j,\beta)$.
In particular, we assume that $t_{l,\delta} \geq 0$ for all
$(l,\delta)>(j,\beta)$ with equality exactly when $(l,\delta)\in C$. 

To prove (ii) at step $(j,\beta)$, we proceed by a decreasing secondary induction on
$(k,\eta)$. 

When $(k,\eta)=(m+1,p)$, the result is trivial. So let 
$ (j,\beta)\leq \;(k,\eta) < (m+1,p)$
and assume the result established when $(k,\eta)$ is replaced by $(k,\eta)^+$.
In particular, for the lacunary sequence
$(\underline{i},\underline{\alpha})=((i_0,\alpha_0), ~(i_1,\alpha_1), ...,
~(i_t,\alpha_t)) \in \mathrm{LC}_{j,\beta}$, we assume that
$$\Delta_{\underline{i},\underline{\alpha}}^C(M)= [i_0,\dots ,i_s ~|~\alpha_0
, \dots , \alpha_s](M^{(k,\eta)^+}) t_{i_{s+1},\alpha_{s+1}} \cdots
t_{i_t,\alpha_t},$$ where $(i_s,\alpha_s) < (k,\eta)^+ \leq
(i_{s+1},\alpha_{s+1})$. \\

For each $(k,\eta)$, exactly 
one of the following five cases must occur\newline
{\bf Case 1}: $(k,\eta)=(i_s,\alpha_s)$; \newline
{\bf Case 2}: $k=i_s$ and $\eta > \alpha_s$; \newline
{\bf Case 3}: $i_s < k \leq i_{s+1}$ and $\eta \in \{1,\dots
, \alpha_0\} \cup \{ \alpha_1 , \dots , \alpha_s \}$; \newline
{\bf Case 4}: $i_s < k < i_{s+1}$ and $\eta > \alpha_{s+1}$;\newline
{\bf Case 5}: $i_s < k \leq i_{s+1}$ and there exists $h \in \gc 0, s \dc$ such that $ \alpha_h < \eta <\alpha_{h+1}$.\\

We investigate each case separately to prove (ii). \\

\noindent $\bullet$ {\bf Case 1:} $(k,\eta)=(i_s,\alpha_s)$.\\ 
If $s=0$, then we have 
$$[i_0,\dots ,i_s ~|~\alpha_0 , \dots , \alpha_s](M^{(k,\eta)^+})
= t_{i_0,\alpha_0}=1 \cdot t_{i_0,\alpha_0},$$ 
and the result is proved.

Assume now that $s>0$, so the box $(i_s,\alpha_s)$ is white in $C$. 
As $(i_s,\alpha_s)=(k,\eta)>(j,\beta)=(i_0,\alpha_0)$, 
we have $t_{i_s,\alpha_s}>0$ by the primary induction hypothesis on $(j,\beta)$. 
From \cite[Proposition 3.7]{gll1}, we deduce: 
$$[i_0,\dots ,i_s ~|~\alpha_0 , \dots , \alpha_s](M^{(k,\alpha_s)^+})
= [i_0,\dots ,i_{s-1} ~|~\alpha_0 , \dots , \alpha_{s-1}](M^{(k,\alpha_s)}) 
\cdot t_{i_s , \alpha_s},$$ 
as required. \\

\noindent $\bullet$ {\bf Case 2:} $k=i_s$ and $\eta > \alpha_s$. In this case,
it follows from \cite[Proposition 3.11]{gll1} that $$[i_0,\dots ,i_s
~|~\alpha_0 , \dots , \alpha_s](M^{(k,\eta)^+})= [i_0,\dots ,i_{s} ~|~\alpha_0
, \dots , \alpha_{s}](M^{(k,\eta)}),$$ as desired.\\

\noindent $\bullet$ {\bf Case 3:} $i_s < k \leq i_{s+1}$ and $\eta \in \{1,\dots
, \alpha_0\} \cup \{ \alpha_1 , \dots , \alpha_s \}$. In this case, it follows
from \cite[Proposition 3.11]{gll1} that $$[i_0,\dots ,i_s ~|~\alpha_0 , \dots ,
\alpha_s](M^{(k,\eta)^+})= [i_0, \dots , i_{s} ~|~\alpha_0 , \dots ,
\alpha_{s}](M^{(k,\eta)}),$$ as desired.\\

\noindent $\bullet$ {\bf Case 4:} $i_s < k < i_{s+1}$ and $\eta > \alpha_{s+1}$. 

If $(k,\eta)$ is black in $C$, then $t_{k,\eta}=0$ by the  
primary induction hypothesis
on $(j,\beta)$, and so it follows from \cite[Proposition 3.11]{gll1} that
$$[i_0, \dots , i_s ~|~\alpha_0 , \dots , \alpha_s](M^{(k,\eta)}) = [i_0, \dots
, i_s ~|~\alpha_0 , \dots , \alpha_s](M^{(k,\eta)^+}),$$ as desired. \\

Now we can assume that $(k,\eta)$ is white. In this case, it follows from
\cite[Proposition 3.13]{gll1} that the difference $$[i_0, \dots , i_s ~|~\alpha_0
, \dots , \alpha_s](M^{(k,\eta)^+}) - [i_0, \dots , i_s ~|~\alpha_0 ,  \dots , 
\alpha_s](M^{(k,\eta)})$$ is a scalar multiple of $t_{k,\alpha_s}$. However,
as $(k,\eta)$ is white and the sequence $((i_0,\alpha_0), (i_1,\alpha_1),
..., (i_t,\alpha_t))$ is lacunary, we deduce from the fifth condition  of the
definition of lacunary sequences that the boxes $(k,\alpha)$ are all black for
$\alpha_0 \leq \alpha < \alpha_{s+1}$. In particular, $(k,\alpha_s)$ is black
in $C$, and so $t_{k,\alpha_s}=0$ by the  primary induction hypothesis on $(j,\beta)$.
Hence, we get $$[i_0, \dots , i_s ~|~\alpha_0 ,  \dots , 
\alpha_s](M^{(k,\eta)^+}) = [i_0 , \dots , i_s ~|~\alpha_0 ,  \dots , 
\alpha_s](M^{(k,\eta)}),$$ as required.\\

\noindent $\bullet$ {\bf Case 5:} $i_s < k \leq i_{s+1}$ and there exists $h \in \gc 0, s \dc$ such that $ \alpha_h < \eta <\alpha_{h+1}$. 
If $(k,\eta)$ is black, then $t_{k,\eta}=0$ by the induction
hypothesis on $(j,\beta)$, and so it follows from \cite[Proposition 3.11]{gll1}
that $$[i_0, \dots , i_s ~|~\alpha_0 , \dots , \alpha_s](M^{(k,\eta)}) =
[i_0, \dots , i_s ~|~\alpha_0 , \dots , \alpha_s](M^{(k,\eta)^+}),$$ as desired.
\\

From now on, we assume that $(k,\eta)$ is white, so that $t_{k,\eta}>0$ (by
the primary induction hypothesis on $(j,\beta)$). 

First, as $(k,\eta)$ is white with $k >i_h$ and $\alpha_h < \eta <
\alpha_{h+1}$, the first part of the sixth condition of a lacunary sequence is not
satisfied for the lacunary sequence $(\underline{i},\underline{\alpha})$.
Hence, the second part has to be satisfied, that is: $(i,\delta)$ is black for
all $i <i_{h+1}$ and $\alpha_h < \delta < \alpha_{h+1}$.

We claim that the sequence $((i,\eta), ~(i_{h+1},\alpha_{h+1}), \dots ,
(i_t,\alpha_t))$ is lacunary for every $i < i_{h+1}$. For, we just need to
prove that $(i',\alpha')$ is black when $i'<i_{h+1}$ and $\eta \leq \alpha ' <
\alpha_{h+1}$ as this will prove that the second parts of the fifth and sixth conditions
 are satisfied for the first step of the lacunary sequence. Under these
conditions, we have $i' < i_{h+1}$ and $\alpha_h < \eta \leq \alpha' <
\alpha_{h+1}$, so the above shows that the box $(i',\alpha')$ is indeed black.  In particular, note for later use 
that $(i,\eta)$ is black for all $i <i_{h+1}$.

%%%%%%%%%%%%%%%%%%%%%%%%%%%%%%%%%%%%%%%%%%%%%%%%
%%%%%%%%%%%%%%%%%%%%%%%%%%%%%%%%%%%%%%%%%%%%%%%%
 Let $i \in \{i_0, \dots, i_h\}$. Note that $(i,\eta)>
(i_0,\alpha_0)=(j,\beta)$; so (i) and (ii) apply to any 
lacunary sequence starting at  $(i,\eta)$, 
by the primary inductive hypothesis. We apply (ii) to the lacunary sequence 
$((i,\eta), (i_{h+1},\alpha_{h+1}), \dots ,
(i_t,\alpha_t))$ in two instances. Recall that $h\leq s$; so $(i_{s+1},\alpha_{s+1})$ is in this lacunary sequence. 
Moreover, as $i \leq i_s <k < i_{s+1}$, 
we have $(i_s,\alpha_s)<(k,\alpha_h)\leq (i_{s+1},\alpha_{s+1})$ if $h<s$ and 
$(i,\eta)<(k,\alpha_h)\leq (i_{s+1},\alpha_{s+1})$ if $h=s$; so, in both cases, we get
\begin{eqnarray*}
\lefteqn{
[i, i_{h+1} ,  \dots , i_t ~|~ \eta , \alpha_{h+1} , \dots , \alpha_t](M)
=}\\
&&[i , i_{h+1} ,  \dots , i_s ~|~ \eta , \alpha_{h+1} , \dots , \alpha_s]
(M^{(k,\alpha_h)}) \cdot t_{i_{s+1},\alpha_{s+1}} \cdot  ... 
\cdot t_{i_t,\alpha_t}
\end{eqnarray*}
Next, apply (ii) at the pivot $(i, \eta)$ (so we are in the case $s=-1$) 
to obtain
\[
[i, i_{h+1} ,  \dots , i_t ~|~ \eta , \alpha_{h+1} , \dots , \alpha_t](M)
=
1.t_{i,\eta} \cdot t_{i_{h+1},\alpha_{h+1}} \cdot ... 
\cdot t_{i_t,\alpha_t}
\]
Thus, 
\[
[i , i_{h+1} ,  \dots , i_s ~|~ \eta , \alpha_{h+1} , \dots , \alpha_s]
(M^{(k,\alpha_h)}) \cdot t_{i_{s+1},\alpha_{s+1}} \cdot  ... 
\cdot t_{i_t,\alpha_t}
=
t_{i,\eta} \cdot t_{i_{h+1},\alpha_{h+1}} \cdot ... 
\cdot t_{i_t,\alpha_t}
\]

%%%%%%%%%%%%%%%%%%%%%%%%%%%%%%%%%%%%%%%%%%%%%%%%
%%%%%%%%%%%%%%%%%%%%%%%%%%%%%%%%%%%%%%%%%%%%%%%%
 
 Recall that, for all $i \in \{i_0, \dots, i_h\}$, the box $(i,\eta)$ is black.
Hence, $t_{i,\eta}=0$ by the primary induction hypothesis on $(j,\beta)$ 
(which we can
apply as $(i,\eta) \geq (i_0,\eta) > (i_0,\alpha_0)=(j,\beta)$). 
So we get $$
[i, i_{h+1} ,   \dots , i_s ~|~ \eta , \alpha_{h+1} , \dots ,
\alpha_s](M^{(k,\alpha_h)}) \cdot t_{i_{s+1},\alpha_{s+1}} \cdot ... \cdot
t_{i_t,\alpha_t}=0.$$ Similarly, as the boxes $(i_{s+1},\alpha_{s+1})$, ...,
$(i_t,\alpha_t)$ are all white, we get that $t_{i_{s+1},\alpha_{s+1}}, \dots,
t_{i_t,\alpha_t}$ are all positive, so 
\begin{equation} 
\label{equation3}
[i, i_{h+1},  \dots , i_s ~|~ \eta , \alpha_{h+1} , \dots , \alpha_s](M^{(k,\alpha_h)}) =0, 
\end{equation}
for all $i \in \{i_0, \dots, i_h\}$.

Note now that the sequence $((i_{h+1},\alpha_{h+1}), \dots , (i_t,\alpha_t))$
is lacunary. So following the same reasoning as above we get that 
\begin{equation} 
\label{equation4}
[i_{h+1}, \dots , i_s ~|~ \alpha_{h+1} , \dots , \alpha_s](M^{(k,\alpha_h)})
=t_{i_{h+1},\alpha_{h+1}} \cdots t_{i_s,\alpha_s}>0.
\end{equation}

We then deduce from (\ref{equation3}), (\ref{equation4}) and Sylvester's 
identity that $$[i_0 , \dots i_h , i_{h+1}, \dots , i_s ~|~ \alpha_0 , \dots
,\alpha_{h-1} , \eta , \alpha_{h+1} , \dots , \alpha_s](M^{(k,\alpha_h)})=0.$$
Hence, we deduce from \cite[Proposition 3.13]{gll1} that $$[i_0, \dots , i_s
~|~\alpha_0 , \dots , \alpha_s](M^{(k,\eta)}) = [i_0, \dots , i_s ~|~\alpha_0 ,
\dots , \alpha_s](M^{(k,\eta)^+}),$$ as desired. \\

Hence, we have just proved that (ii) is true at step $(j,\beta)$. It remains
to establish (i); that is we need to prove that $t_{j,\beta} \geq 0$ with
equality if and only if $(j,\beta) \in C$. Note that this is automatic under assumption
(a) thanks to Theorem~\ref{thm:cell2}. 
Under assumption (b), 
by (ii) at step $(j,\beta)$ with $(k,\eta)=(j,\beta)$, we have
$$\Delta_{j,\beta}^C(M)=t_{j,\beta} \cdot t_{j_1,\beta_1} \cdot ... \cdot
t_{j_d,\beta_d}.$$ As the boxes $(j_l,\beta_l)$ with $1 \leq l \leq d$ are
white, we deduce from the induction hypothesis that $t_{j_l,\beta_l}>0$ for
all $l$. As $\Delta_{j,\beta}^C(M) \geq 0$ with equality exactly when
$(j,\beta) \in C$ (by hypothesis), we get that $t_{j,\beta} \geq 0$ with
equality exactly when $(j,\beta) \in C$. This proves (i).
\end{proof}

%%%%%%%%%%%%%%%%%%%%%%%%%%%%%%%%%%%%%%%%

The following two results now follow easily.

%%%%%%%%%%%%%%%%%%%%%%%%%%%%%%%%%%%%%%%%

\begin{proposition}\label{proposition-test1} 
Let $M \in \mc_{m,p}(\bbR)$. We keep the notation of
Proposition~\ref{proposition:a-or-b}. Assume that $M$ is tnn and belongs to the tnn cell  $S^0_C$ 
associated to $C$. Let $(j,\beta)\in \gc 1,m \dc \times \gc 1,p \dc$. 

Then we have $\Delta_{j,\beta}^C(M) =0$ if $(j,\beta) \in C$ and
$\Delta_{j,\beta}^C(M) > 0$ if $(j,\beta) \notin C$. 
\end{proposition}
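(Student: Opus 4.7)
The plan is to derive this as a near-immediate corollary of Proposition~\ref{proposition:a-or-b} under assumption (a), combined with Theorem~\ref{thm:cell2}(3).

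First I would pick a lacunary sequence $(\underline{j},\underline{\beta}) = ((j_0,\beta_0), (j_1,\beta_1), \dots, (j_d,\beta_d)) \in \mathrm{LC}_{j,\beta}$ used to define $\Delta_{j,\beta}^C$ (with $(j_0,\beta_0)=(j,\beta)$). Since $M$ is tnn and lies in $S^0_C$, hypothesis (a) of Proposition~\ref{proposition:a-or-b} is satisfied, so the factorisation
\[
\Delta_{j,\beta}^C(M) = t_{j_0,\beta_0} \cdot t_{j_1,\beta_1} \cdots t_{j_d,\beta_d}
\]
holds, where $\widetilde{M} = (t_{i,\alpha})$ is the matrix produced by the Cauchon reduction algorithm.

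Next, I would invoke the defining property of a lacunary sequence: the boxes $(j_1,\beta_1), \dots, (j_d,\beta_d)$ are all white in $C$. Combined with Theorem~\ref{thm:cell2}(3), which states that $M \in S^0_C$ is equivalent to $t_{i,\alpha} > 0$ for $(i,\alpha) \notin C$ and $t_{i,\alpha} = 0$ for $(i,\alpha) \in C$, this yields $t_{j_l,\beta_l} > 0$ for each $l = 1, \dots, d$. Hence the product $t_{j_1,\beta_1} \cdots t_{j_d,\beta_d}$ is strictly positive, and $\Delta_{j,\beta}^C(M)$ has exactly the same sign (or vanishing) behaviour as $t_{j,\beta}$.

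Finally, applying Theorem~\ref{thm:cell2}(3) once more to the pivot box $(j,\beta) = (j_0,\beta_0)$ gives $t_{j,\beta} = 0$ precisely when $(j,\beta) \in C$ and $t_{j,\beta} > 0$ precisely when $(j,\beta) \notin C$, and the claim follows. There is essentially no obstacle here: all the heavy lifting has been done in Proposition~\ref{proposition:a-or-b}, and this proposition is a straightforward extraction of the case where the lacunary sequence used in the statement is the chosen one defining $\Delta_{j,\beta}^C$. The only minor point to be careful about is that $\Delta_{j,\beta}^C$ is defined in terms of a fixed (but arbitrary) choice of lacunary sequence starting at $(j,\beta)$; one should note that the conclusion is independent of that choice, which is guaranteed by the uniform factorisation formula in Proposition~\ref{proposition:a-or-b}.
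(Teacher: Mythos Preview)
Your proof is correct and follows essentially the same approach as the paper: apply Proposition~\ref{proposition:a-or-b} under hypothesis (a) to obtain the factorisation $\Delta_{j,\beta}^C(M)=t_{j_0,\beta_0}\cdots t_{j_d,\beta_d}$, use that $(j_1,\beta_1),\dots,(j_d,\beta_d)$ are white together with Theorem~\ref{thm:cell2}(3) to see the last $d$ factors are strictly positive, and then read off the sign of $\Delta_{j,\beta}^C(M)$ from that of $t_{j,\beta}$. The paper additionally remarks that $\Delta_{j,\beta}^C(M)\ge 0$ since $M$ is tnn, but this is not needed once the factorisation is in hand.
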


\begin{proof} 
Note that $\Delta_{j,\beta}^C(M) \geq 0$ as $M$ is tnn. This
result follows easily from the previous proposition. Indeed,
$\Delta_{j,\beta}^C(M) = t_{j_0,\beta_0} \cdot t_{j_1,\beta_1} \cdots
t_{j_d,\beta_d}$. As the boxes $(j_s,\beta_s)$ for $s>0$ are white in $C$ and
$M$ belongs to the tnn cell $S^0_C$  associated to $C$, we have $t_{j_s,\beta_s}>0$ for
all $s>0$ by Theorem~\ref{thm:cell2}. Hence, $\Delta_{j,\beta}^C(M) =0$ if and only if
$t_{j,\beta}=t_{j_0,\beta_0}=0$; that is, if and only if $(j,\beta) \in C$ as
$M$ belongs to the tnn cell associated to $C$. 
\end{proof}

\begin{proposition}\label{proposition-test2} 
Let $M \in \mc_{m,p}(\bbR)$. We keep the notation of 
Proposition~\ref{proposition:a-or-b}.
Assume that, for all $(j,\beta)$, we have $\Delta_{j,\beta}^C(M) =0$ if
$(j,\beta) \in C$ and $\Delta_{j,\beta}^C(M) > 0$ if $(j,\beta) \notin C$.
Then $M$ is tnn and belongs to the tnn cell  $S^0_C$ associated to $C$. 
\end{proposition}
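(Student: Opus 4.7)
The plan is to obtain this statement as an immediate corollary of Proposition~\ref{proposition:a-or-b} together with Theorems~\ref{thm:cell1} and~\ref{thm:cell2}. The key observation is that the induction carried out in the proof of Proposition~\ref{proposition:a-or-b} under hypothesis~(b) establishes not merely the product formula stated in its conclusion but also the auxiliary statement~(i), namely: $t_{l,\delta} \geq 0$ for every $(l,\delta)\in \gc 1,m\dc \times \gc 1,p\dc$, with $t_{l,\delta}=0$ if and only if $(l,\delta)\in C$. This sign information on the entries of $\widetilde{M}$ is exactly what is needed to translate back into the language of the cell decomposition.

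First I would invoke this conclusion~(i) from the proof of Proposition~\ref{proposition:a-or-b}: it asserts that the matrix $\widetilde{M}=(t_{i,\alpha})$ obtained from $M$ at the end of the Cauchon reduction algorithm is nonnegative and its zero entries are precisely those indexed by $C$. In other words, $\widetilde{M}$ is a nonnegative Cauchon matrix associated to the Cauchon diagram $C$.

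Next I would apply Theorem~\ref{thm:cell1}, which says that $M$ is totally nonnegative exactly when $\widetilde{M}$ is a nonnegative Cauchon matrix. Since this has just been verified, $M\in\mmptnn$. Finally, by Theorem~\ref{thm:cell2}(3), membership in the tnn cell $S^0_C$ is characterised precisely by the condition that $t_{i,\alpha}=0$ when $(i,\alpha)\in C$ and $t_{i,\alpha}>0$ otherwise, which is the very conclusion~(i) that we have already extracted; hence $M\in S^0_C$.

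There is essentially no substantive obstacle here, because Proposition~\ref{proposition:a-or-b} has done all the work; the only small point requiring care is that one must draw on the intermediate statement~(i) from the inductive proof of that proposition rather than on the product formula alone, which is weaker. Once~(i) is in hand, total nonnegativity and the cell membership follow mechanically from Theorems~\ref{thm:cell1} and~\ref{thm:cell2}.
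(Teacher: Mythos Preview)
Your proposal is correct and follows essentially the same route as the paper: both extract the auxiliary conclusion~(i) from the proof of Proposition~\ref{proposition:a-or-b} (that $t_{i,\alpha}\geq 0$ with equality exactly on~$C$) and then invoke the characterisation of cell membership in Theorem~\ref{thm:cell2}. Your version is slightly more explicit in separately citing Theorem~\ref{thm:cell1} to first establish that $M$ is tnn before applying Theorem~\ref{thm:cell2}(3), which is a harmless and arguably cleaner presentation of the same argument.
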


\begin{proof}
By Proposition~\ref{proposition:a-or-b}, we know that each $t_{i,\alpha}\geq 0$ 
and that $t_{i,\alpha}> 0$ precisely when $(i,\alpha)\in C$.
The result now follows from Theorem \ref{thm:cell2}.
\end{proof} 

Propositions~\ref{proposition-test1}
and~\ref{proposition-test2} together provide a test for membership of a fixed
totally nonnegative cell. Each cell is specified by a Cauchon diagram. For a
given $m\times p$ Cauchon diagram $C$ assume that a lacunary sequence has been
chosen for each box $(j,\beta)$ and for a given real $m\times p$ matrix $M$
check whether $\Delta_{j,\beta}^C(M) > 0$ for each white box and
$\Delta_{j,\beta}^C(M) =0$ for each black box. Then $M$ is totally nonnegative
and is in the cell specifed by $C$ if and only if these $mp$ statements are
all true. This is summarised in our main result. 

\begin{theorem} \label{theorem-main} 
Let $C$ be an $m \times p$ Cauchon diagram. For all $(j,\beta) \in \gc 1,m\dc \times \gc 1,p \dc$, fix a
lacunary sequence $(\underline{j},\underline{\beta})=((j_0,\beta_0),
~(j_1,\beta_1), ..., ~(j_d,\beta_d)) \in \mathrm{LC}_{j,\beta}$, and set
$\Delta_{j,\beta}^C := \Delta_{\underline{j},\underline{\beta}}^C$. Let $M \in \mathcal{M}_{m,p}(\mathbb{R})$. The following are equivalent.
\begin{enumerate}
\item $M$ is tnn and belongs to the tnn cell  $S^0_C$ associated to $C$. 
\item For all $(j,\beta) \in \gc 1, m \dc \times \gc 1, p \dc$, we have $\Delta_{j,\beta}^C(M) =0$ if $(j,\beta) \in C$ and  $\Delta_{j,\beta}^C(M) > 0$ if $(j,\beta) \notin C$.
\end{enumerate}
\end{theorem}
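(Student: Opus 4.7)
The plan is to obtain Theorem~\ref{theorem-main} as an immediate combination of Propositions~\ref{proposition-test1} and~\ref{proposition-test2}, since the substantive work has already been done. Both directions of the equivalence are the content of those two propositions, so no new technical ideas are required.

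For the implication (1)$\Rightarrow$(2), I would simply invoke Proposition~\ref{proposition-test1}: if $M$ is tnn and lies in the cell $S^0_C$, then for every $(j,\beta)$ the chosen lacunary-sequence minor $\Delta_{j,\beta}^C(M)$ vanishes precisely when $(j,\beta)\in C$. For the converse (2)$\Rightarrow$(1), I would apply Proposition~\ref{proposition-test2}: the hypothesis that $\Delta_{j,\beta}^C(M)=0$ iff $(j,\beta)\in C$ for every $(j,\beta)$ forces $M$ into $S^0_C$ (and in particular $M$ is tnn). Combined, these two implications give the stated biconditional.

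The only subtlety worth flagging, which is not really an obstacle but rather a sanity check, is that the choice of lacunary sequence at each box $(j,\beta)$ is fixed in the statement of the theorem, and both Propositions~\ref{proposition-test1} and~\ref{proposition-test2} are stated for any such fixed choice. Thus the same fixed assignment of lacunary sequences that defines the $mp$ minors $\Delta_{j,\beta}^C$ can be used on both sides of the equivalence, and no compatibility issue arises. The genuine difficulty lay in proving the factorisation $\Delta_{\underline{i},\underline{\alpha}}^C(M)=t_{i_0,\alpha_0}\cdots t_{i_t,\alpha_t}$ of Proposition~\ref{proposition:a-or-b} under either hypothesis (a) or (b); once that identity is available, the present theorem follows in a couple of lines.
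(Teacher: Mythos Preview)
Your proposal is correct and matches the paper's own treatment exactly: the theorem is presented there as a summary of Propositions~\ref{proposition-test1} and~\ref{proposition-test2}, with no additional argument beyond noting that the two implications together give the equivalence. Your remark about the fixed choice of lacunary sequences being compatible across both directions is an appropriate sanity check and does not go beyond what the paper implicitly assumes.
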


Note that this test only involves $mp$ minors. In the case that $C$ is the
Cauchon diagram with all boxes coloured white, the test states that a real
matrix $M$ is totally positive if and only if each final minor of $M$ is
strictly positive. As discussed earlier, this is the well-known Gasca and
P\~ena test, but applied to final minors rather than initial minors.

\begin{example} Consider the following Cauchon diagram:
\begin{center}
\begin{tikzpicture}[xscale=0.6, yscale=0.6]
\node at (0,1.5) {$C=$};
\draw [fill=gray] (2.1,1.1) rectangle (2.9,1.9);
\draw [fill=gray] (3.1,1.1) rectangle (3.9,1.9);
\draw [fill=gray] (4.1,2.1) rectangle (4.9,2.9);
\draw [help lines, black, thick] (2,0) grid   (5,3);
\end{tikzpicture}
\end{center}
Then a $3 \times 3$ real matrix $M$ is tnn and belongs to the cell associated
to $C$ if and only if 
$$\begin{array}{lll} \Delta_{1,1}(M)=[13|12](M) >0 &
\Delta_{1,2}(M)=[12|23](M)>0&
\Delta_{1,3}(M)=[1|3](M) =0 \\ & & \\ \Delta_{2,1}(M)=[23|12](M) =0 &
\Delta_{2,2}(M)=[23|23](M) =0 & \Delta_{2,3}(M)=[2|3](M) >0 \\ & & \\
\Delta_{3,1}(M)=[3|1](M) >0~ & \Delta_{3,2}(M)=[3|2](M) >0~ &
\Delta_{3,3}(M)=[3|3](M) >0. \\
\end{array}$$
(The lacunary sequences have all been obtained 
by using Algorithm \ref{algo1}.)

It is easy to check that the matrix $M=\left( \begin{array}{ccc} 16 & 5 & 0 \\ 12 & 6 & 3 \\ 4 & 2 & 1 \end{array}\right)$ satisfies the above nine conditions. Hence, we deduce from Theorem \ref{theorem-main} that $M$ is tnn and belongs to the tnn cell $S^0_C$ associated to $C$.
\end{example}

%%%%%%%%%%%%%%%%%%%%%%%%%%%%%%%%%%%%%%%%%%%%%%%%%%%%%%%%%%%

%%%%%%%%%%%%%%%%%%%%%%%%%%%%%%%%%%%%%%%%%%%%%%%%%%%%%%%%%%%
\begin{minipage}[c]{\linewidth}
~\\
\noindent 
St\'ephane Launois\\ 
School of Mathematics, Statistics \& Actuarial Science,\\ 
University of Kent, \\
Canterbury, Kent CT2 7NF, United Kingdom\\
E-mail: {\tt S.Launois@kent.ac.uk}
 \\[10pt]
Tom Lenagan\\
Maxwell Institute for Mathematical Sciences,\\
School of Mathematics, University of Edinburgh,\\
James Clerk Maxwell Building, King's Buildings, Mayfield Road,\\
Edinburgh EH9 3JZ, Scotland, UK\\
E-mail: {\tt tom@maths.ed.ac.uk}
\end{minipage}

%%%%%%%%%%%%%%%%%%%%%%%%%%%%%%%%%%%%%%%%%%%%%%%%%%%%%%%%%%%%%

\end{document}